\DeclareMathOperator*{\tend}{\longrightarrow}
\DeclareMathOperator*{\wtend}{\rightharpoonup}
\DeclareMathOperator*{\limss}{\overline{\rm lim}}
\theoremstyle{definition}
\newtheorem{defi}{Definition}
\newtheorem{ex}[defi]{Example}
\newtheorem{rmk}[defi]{Remark}
\theoremstyle{plain}
\newtheorem{thm}[defi]{Theorem}
\newtheorem{prop}[defi]{Proposition}
\newtheorem{cor}[defi]{Corollary}
\newtheorem{lemma}[defi]{Lemma}
\newcommand{\mc}{\mathcal}
\newcommand{\what}{\widehat}
\newcommand{\vphi}{\varphi}
\newcommand{\R}{\mathbb{R}}
\newcommand{\Q}{\mathbb{Q}}
\newcommand{\N}{\mathbb{N}}
\newcommand{\Z}{\mathbb{Z}}
\newcommand{\dx}{ \, {\rm d} x}
\newcommand{\B}{\dot{B}^s_{p, r}}
\newcommand{\BB}{\dot{B}^s_{p, \infty}}
\newcommand{\CC}{C^s_p}
\begin{document}

\newcommand{\dimitri}[1]{\textcolor{red}{[***DC: #1 ***]}}
\newcommand{\fra}[1]{\textcolor{blue}{[***FF: #1 ***]}}

\title{\textsc{\Large{\textbf{Remarks on Chemin's space of homogeneous distributions}}}}

\author{\normalsize\textsl{Dimitri Cobb}\vspace{.5cm} \\
\footnotesize{\textsc{Universit\'e de Lyon, Universit\'e Claude Bernard Lyon 1}} \\
{\footnotesize \it Institut Camille Jordan -- UMR 5208} \vspace{.1cm} \\
\footnotesize{\ttfamily{cobb@math.univ-lyon1.fr}}
}

\vspace{.2cm}
\date\today

\maketitle

\begin{abstract}
This article focuses on Chemin's space $\mc S'_h$ of homogeneous distributions, which was introduced to serve as a basis for realizations of subcritical homogeneous Besov spaces. We will discuss how this construction fails in multiple ways for supercritical spaces. In particular, we study its intersection $X_h := \mc S'_h \cap X$ with various Banach spaces $X$, namely supercritical homogeneous Besov spaces and the Lebesgue space $L^\infty$. For each $X$, we find out if the intersection $X_h$ is dense in $X$. If it is not, then we study its closure $C = {\rm clos}(X_h)$ and prove that the quotient $X/C$ is not separable and that $C$ is not complemented in $X$.
\end{abstract}
\paragraph*{2020 Mathematics Subject Classification:}{\small 42B35 
(primary);
46E30, 
46E35, 
42B37 
(secondary).}

\paragraph*{Keywords: }{\small Homogeneous Besov Spaces; Littlewood-Paley Analysis; Complementation in Banach Spaces; Low Frequencies; Supercritical Spaces.}

\section{Introduction}

The purpose of this short article is to study the role of Chemin's space $\mc S'_h$ of homogeneous distributions in the structure of supercritical\footnote{In this article, the term \textsl{supercritical} will refer to the hard case, where homogeneous Besov spaces \textsl{are not} realized as subspaces of $\mc S'_h$. (Strictly) supercritical exponents for $\B$ are $s > d/p$, or $s=d/p$ and $r > 1$.} homogeneous Besov spaces. The space $\mc S'_h$ was introduced by J.-Y. Chemin in the mid 90s as being the set of tempered distributions $f \in \mc S'$ that satisfy the following low frequency condition:
\begin{equation*}
\chi(\lambda \xi) \what{f}(\xi) \tend_{\lambda \rightarrow + \infty} 0 \qquad \text{in } \mc S',
\end{equation*}
where $\chi \in \mc D$ is a compactly supported cut-off function having value $\chi \equiv 1$ in a neighborhood of $\xi = 0$. This must be seen as a (very weak) description of the behavior of $f$ at infinity $|x| \rightarrow + \infty$.

\medskip

Homogeneous Besov spaces are defined as subspaces of the quotient space of tempered distributions modulo polynomials $\mc S' / \, \R[X]$ endowed with an appropriate norm expressed in terms of the homogeneous Littlewood-Paley decomposition (see Definition \ref{d:c2SpH} below). In other words,
\begin{equation*}
\B = \Big\{ f \in \mc S' / \, \R[X], \quad \| f \|_{\B} < + \infty \Big\}.
\end{equation*}
On the other hand, the space $\mc S'_h$ contains no polynomial function, so the natural embedding $\mc S'_h \hookrightarrow \mc S' / \, \R[X]$, makes it possible to consider $\mc S'_h \cap \B$ as a subspace of $\B$. In this sense, it is well known that $\B \subset \mc S'_h$ as long as the space $\B$ is subcritical, that is as long as the indices $(s, p, r) \in \R \times [1, + \infty]^2$ satisfy
\begin{equation}\label{ieq:superCrit}
s < \frac{d}{p} \qquad \text{or} \qquad s = \frac{d}{p} \text{ and } r = 1.
\end{equation}
In other words, under condition \eqref{ieq:superCrit}, all $f \in \B \subset \mc S' / \, \R[X]$ are the image of an element of $\mc S'_h$ by the embedding $\mc S'_h \hookrightarrow \mc S' / \, \R[X]$. In fact, this property of subcritical homogeneous Besov spaces is the reason for which the space $\mc S'_h$ was introduced: it was to serve as a basis for realizations of $\B$ \cite{Chemin-cours}, \cite{Danchin-cours}, \cite{BCD}. 

\medskip

However, although it is common knowledge that $\mc S'_h \cap \B \subsetneq \B$ if the supercriticality condition \eqref{ieq:superCrit} does not hold,\footnote{In the sense that there are (classes of) distributions $f \in \B$ such that $f \notin \mc S'_h \oplus \R[X]$.} little has been said about this strict inclusion. Our goal, in this short article, is to find out how far this inclusion actually is from being an equality.

\medskip

More precisely, we will prove three properties, which hold as soon as \eqref{ieq:superCrit} does not:
\begin{enumerate}[(i)]
\item the strict subspace $\mc S'_h \cap \B$ is dense in $\B$ if and only if $r < + \infty$;
\item when $r = + \infty$, note $\CC := {\rm clos}(\mc S'_h \cap \BB)$ the closure of the intersection in the $\BB$ toplogy; then the quotient space $\BB / \CC$ is not separable;
\item if $p < + \infty$, the space $\CC$ is not complemented in $\BB$. In other words, there is no decomposition $\BB = \CC \oplus G$ with continuous projections.
\end{enumerate}

Let us comment a bit on these statements. While for $r < + \infty$ there seems not to be much ``in between'' $\mc S'_h \cap \B$ and $\B$, it is not so for $\BB$. The fact that $\BB / \CC$ is not separable means that the inclusion $\mc S'_h \cap \BB \subsetneq \BB$ is indeed very far from being an equality.

The third result regarding the non-complementation of $\CC$ must be seen in the same way. As we will see, the proof relies on the construction of an uncountable family of relatively ``independent'' subspaces of $\BB / \CC$, and is in fact a direct prolongation of the proof we give of (ii).

\medskip

The study of complementation in Banach spaces is by no means new in the landscape of functional analysis, and has been marked by a number of very deep results (we refer to \cite{CQ} and the references therein for an enlightening introduction to the topic). Amongst these is the Phillips-Sobczyk theorem \cite{P}, \cite{S} which states that the space $c_0(\N)$ of sequences converging to zero is not complemented in the space $\ell^\infty(\N)$ of bounded sequences: there is no bounded projection $P : \ell^\infty(\N) \tend c_0(\N)$ (see also \cite{AK} Section 2.5 pp. 44-48).

Our result regarding the non-complementation of $\CC$ in $\BB$ is an adaptation of a proof of the Phillips-Sobczyk theorem by Whitley \cite{W}, which is based on a countability argument. As we will see, a well chosen embedding $J : \ell^\infty (\N) \tend \BB$ will allow the main ingredients of Whitley's proof to find their counterpart in the framework of $\BB$.

\medskip

We do certainly not presume to bring any meaningful contribution to the topic of complementation in Banach spaces: our goal is merely to use this theory to illustrate the role of $\mc S'_h$ in the structure of homogeneous Besov spaces.

\medskip

We point out that, besides serving as a basis for realizations of subcritical homogeneous Besov spaces, the space $\mc S'_h$ is also involved in the theory of bounded solutions in incompressible hydrodynamics \cite{Cobb}, where uniqueness for the initial value problem depends on low frequency characterizations of the solutions.

\medskip

Let us give a short overview of our paper. In Section \ref{s:c2EQSpH}, we start by giving general definitions on the topic of homogeneous Besov spaces, as well as discussing general properties and alternative definitions of the space $\mc S'_h$. We then turn to the investigation of the three points stated above. More precisely, in Section \ref{s:c1ClosB}, we prove (i) in the form of Theorem \ref{t:c2closB}. Next, in Section \ref{s:c1SepB}, we focus on the separability issue (ii), which is contained in Theorem \ref{t:c2nonSepB}. In Section \ref{s:c1ComplB}, we move on to (iii), which we prove in Theorem \ref{t:c2ComplB}. 

Finally, we end the article with a study of a critical case: that of the space $L^\infty$, which is intermediate between two Besov spaces that behave very differently in light of points (i), (ii) and (iii) above. Consider the chain
\begin{equation*}
\xymatrix{
\dot{B}^{0}_{\infty, 1} \ar[r]^\subset & L^\infty \ar@{->>}[r] & L^\infty / \, \R \ar[r]^\subset & \dot{B}^{0}_{\infty, \infty},
}
\end{equation*}
whose leftmost element $\dot{B}^0_{\infty, 1}$ is subcritical and is contained in the image of $\mc S'_h$ modulo polynomials, and whose rightmost element $\dot{B}^0_{\infty, \infty}$ is superctitical and behaves according to (i), (ii) and (iii) with $p = + \infty$ and $s=0$. In Section \ref{s:c2Linfty}, we try to replace $\B$ and $\BB$ by $L^\infty$ in points (i), (ii) and (iii).

\subsection*{Notations}

In this paragraph, we present some of the notations we will be using throughout the paper.

\begin{itemize}
\item Unless otherwise mentioned, all function spaces will be set on $\R^d$. In that case, we will omit the reference to $\R^d$ in the notation. For instance, $L^p = L^p(\R^d)$ is the set of complex valued functions on $\R^d$.
\item For $p \in [1, + \infty]$, we note $\ell^p(\N)$ and $\ell^p(\Z)$ the usual sequence spaces. The notation $\ell^p$ alone stands for $\ell^p(\N)$. The space $c_0$ is the space of sequences on $\N$ which converge to zero.
\item We note $\mc S$ the space of Schwartz functions and $\mc S'$ the space of tempered distributions, and $\mc D$ is the space of $C^\infty$ compactly supported functions.
\item If $X$ is a Frechet space whose (topological) dual is $X'$, we note $\langle \, . \, , \, . \, \rangle_{X' \times X}$ the associated duality bracket. The bracket $\langle \, . \, , \, . \, \rangle$ without mention to any space is associated to the $\mc S' \times \mc S$ duality.
\item In all that follows, $C$ is a generic constant that may change value from one line to another. When needed, we will specify the useful dependencies of the constant by the notation $C(\, . \,)$. If $a$ and $b$ are two nonnegative quantities, we note $a \lesssim b$\index{$a \lesssim b$} for $A \leq C b$ and $a \approx b$\index{$a \approx b$} for
\begin{equation*}
\frac{1}{C} a \leq b \leq C a.
\end{equation*}
\end{itemize}

\subsection*{Acknowledgements}

I am grateful to my advisor Francesco Fanelli for his help and support while writing this article, even as it it represented a slight detour from our research. I also am deeply indebted to Denis Choimet for commnuicating his passion for analysis to his students both in class and in book \cite{CQ}.

This work has been partially supported by the project
CRISIS (ANR-20-CE40-0020-01), operated by the French National Research Agency (ANR).

\section{Preliminary Definitions and Results}

This Section is devoted to stating a small number of results and definitions which we will need in the following pages. In particular, we will give a few basic properties of the space $\mc S'_h$ as well as the definition of homogeneous Besov spaces.

\subsection{Quotients of Banach Spaces}

In the sequel, we will need to work with quotients of Banach spaces, such as $\ell^\infty / c_0$. Under the right conditions, these have a norm structure which makes them complete. We refer to paragraph 1.39 in \cite{Douglas} for a proof of the Proposition below.

\begin{prop}
Let $X$ be a Banach space and $C \subset X$ be a closed subspace. We equip the quotient $X/C$ with the following norm:
\begin{equation*}
\forall x \in X/C, \qquad \| x \|_{X/C} = \inf_{h \in C} \| x - h \|_X.
\end{equation*}
Then the norm topology of $X/C$ is equal to the quotient topology and the natural projetion $\pi : X \tend X/C$ is bounded.
\end{prop}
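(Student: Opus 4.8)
The plan is to argue in three elementary steps: first verify that $\|\cdot\|_{X/C}$ is genuinely a norm, then check that the natural projection $\pi$ is bounded, and finally identify the norm topology with the quotient topology by proving that $\pi$ is both continuous and open. Positive homogeneity and the triangle inequality for $\|\cdot\|_{X/C}$ are inherited from $\|\cdot\|_X$ simply by passing to the infimum over $h \in C$; the only point where closedness of $C$ intervenes is the separation axiom. Indeed, if $\|x\|_{X/C} = 0$ for a class $x = \tilde{x} + C$, there is a sequence $(h_n)_n \subset C$ with $\|\tilde{x} - h_n\|_X \to 0$, so $\tilde{x} \in \oline{C} = C$, whence $x = 0$ in $X/C$.

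Boundedness of $\pi$ is immediate: for $x \in X$, choosing $h = 0$ in the infimum yields $\|\pi(x)\|_{X/C} = \inf_{h \in C} \|x - h\|_X \le \|x\|_X$, so $\pi$ is $1$-Lipschitz, in particular continuous for the norm topology on $X/C$. The heart of the matter is then the identity $\pi(B_X(0,r)) = B_{X/C}(0,r)$ for every $r > 0$, where $B$ denotes the open ball centred at the origin: the inclusion $\subseteq$ is exactly the $1$-Lipschitz bound, while for $\supseteq$, if $\|\pi(x)\|_{X/C} < r$ then by definition of the infimum there is $h \in C$ with $\|x - h\|_X < r$, and $\pi(x) = \pi(x - h) \in \pi(B_X(0,r))$. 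After translation, $\pi$ maps every open ball of $X$ onto an open ball of $X/C$, and since open sets are unions of balls, $\pi$ is an open map.

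It remains to compare topologies. By definition the quotient topology on $X/C$ consists of the sets $U$ with $\pi^{-1}(U)$ open in $X$. If $U$ is norm-open, then $\pi^{-1}(U)$ is open by continuity of $\pi$, so $U$ is quotient-open. Conversely, if $\pi^{-1}(U)$ is open in $X$, then, $\pi$ being surjective and open, $U = \pi\big(\pi^{-1}(U)\big)$ is norm-open. Hence the two topologies coincide.

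None of these steps presents a real obstacle; the two features worth emphasising are that closedness of $C$ is precisely what upgrades $\|\cdot\|_{X/C}$ from a seminorm to a norm, and that the open-mapping property of $\pi$ follows here directly from the formula for the quotient norm, with no appeal to Baire's theorem — and it is this that forces the norm topology to agree with the quotient topology. (The completeness of $X/C$, not part of the statement above but alluded to in the surrounding text, would then follow from the standard criterion that a normed space in which every absolutely convergent series converges is itself complete, by lifting such series from $X/C$ to $X$.)
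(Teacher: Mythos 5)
Your proof is correct. Note that the paper itself does not prove this proposition at all --- it simply refers the reader to paragraph 1.39 of Douglas's \emph{Banach algebra techniques in operator theory} --- so there is no in-paper argument to compare against; what you have written is the standard textbook proof, and every step checks out: the closedness of $C$ is used exactly where it must be (to get the separation axiom), the identity $\pi\bigl(B_X(0,r)\bigr) = B_{X/C}(0,r)$ correctly yields openness of $\pi$ without any appeal to the open mapping theorem, and the two-sided comparison of topologies via continuity and openness of the surjection $\pi$ is complete.
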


\subsection{Homogeneous Littlewood-Paley Theory}\label{ss:c2LPdecomp}

In this Subsection, we present the basic elements of Littlewood-Paley theory which we will need in order to define the $\mc S'_h$ and Besov spaces.

\medskip

First of all, we introduce the Littlewood-Paley decomposition based on a dyadic partition of unity with respect to the Fourier variable. We fix a smooth radial function $\chi$ supported in the ball $B(0,2)$, equal to $1$ in a neighborhood of $B(0,1)$
and such that $r\mapsto \chi(r\,e)$ is nonincreasing over $\R_+$ for all unitary vectors $e\in\R^d$. Set $\varphi\left(\xi\right)=\chi\left(\xi\right)-\chi\left(2\xi\right)$ and $\vphi_m(\xi):=\vphi(2^{-m}\xi)$ for all $m \in \Z$. The homogeneous dyadic blocks $(\dot{\Delta}_m)_{m \in \mathbb{Z}}$ are defined by Fourier multiplication, namely\footnote{For any function $\sigma(\xi)$, we define the Fourier multiplier $\sigma(D)$ of symbol $\sigma(\xi)$ as being the operator given by the map $f \longmapsto \mc F^{-1}[\sigma(\xi) \what{f}(\xi)]$.}
\begin{equation*}
\forall m \in \mathbb{Z}, \qquad \dot{\Delta}_m = \varphi(2^{-m}D).
\end{equation*}
The main interest of the Littlewood-Paley decomposition (see \eqref{eq:LPdecomposition} below) is the way the dyadic blocks interact with derivatives, and more generally with homogeneous Fourier multipliers.

\begin{lemma}
Let $\sigma$ be a homogeneous function of degree $N \in \mathbb{Z}$. There exists a constant depending only on $\sigma$ and on the the dyadic decomposition function $\chi$ such that, for all $p \in [1, +\infty]$,
\begin{equation*}
\forall m \in \mathbb{Z}, \forall f \in L^p, \qquad \| \dot{\Delta}_m \sigma (D) f \|_{L^p} \leq C 2^{mN} \| \dot{\Delta}_m f \|_{L^p}.
\end{equation*}
\end{lemma}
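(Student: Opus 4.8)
The plan is to reduce the estimate to a convolution inequality by exploiting the support properties of the dyadic decomposition. First I would recall that $\varphi$ is supported in the annulus $\mathcal{C} = \{ \xi : 1/2 \leq |\xi| \leq 2 \}$, so $\varphi(2^{-m}\xi)$ is supported in $2^m \mathcal{C}$. The key observation is to introduce an auxiliary function $\widetilde{\varphi} \in \mc D$, supported in a slightly larger annulus $\widetilde{\mathcal{C}}$ (say $1/4 \leq |\xi| \leq 4$) and satisfying $\widetilde{\varphi} \equiv 1$ on a neighborhood of $\mathcal{C}$, so that $\widetilde{\varphi}(2^{-m}\xi) \varphi(2^{-m}\xi) = \varphi(2^{-m}\xi)$ for all $m \in \Z$. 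Then one writes
\begin{equation*}
\dot{\Delta}_m \sigma(D) f = \varphi(2^{-m}D)\sigma(D) f = \mc F^{-1}\big[ \varphi(2^{-m}\xi) \sigma(\xi) \widetilde{\varphi}(2^{-m}\xi) \big] \ast \dot{\Delta}_m f,
\end{equation*}
since $\dot{\Delta}_m f = \widetilde{\varphi}(2^{-m}D)\varphi(2^{-m}D) f$ and the Fourier multiplier symbols multiply. By Young's convolution inequality, it then suffices to bound the $L^1$ norm of the kernel $g_m := \mc F^{-1}\big[ \varphi(2^{-m}\xi)\sigma(\xi)\widetilde{\varphi}(2^{-m}\xi) \big]$ by $C 2^{mN}$.

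The second step is the scaling computation. Because $\sigma$ is homogeneous of degree $N$, we have $\varphi(2^{-m}\xi)\sigma(\xi)\widetilde\varphi(2^{-m}\xi) = 2^{mN}\, \theta(2^{-m}\xi)$ where $\theta(\xi) := \varphi(\xi)\sigma(\xi)\widetilde\varphi(\xi)$ is a fixed function independent of $m$. Here one should note that $\theta \in \mc D$: although $\sigma$ is only homogeneous (hence possibly singular at the origin and merely smooth away from it — one should assume, as is standard, that $\sigma$ is smooth on $\R^d \setminus \{0\}$), the cut-off $\varphi$ vanishes near $\xi = 0$, so the product $\theta$ is smooth and compactly supported, supported in $\mathcal{C}$. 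A change of variables in the inverse Fourier transform gives $g_m(x) = 2^{mN} 2^{md}\, \check\theta(2^m x)$, whence $\| g_m \|_{L^1} = 2^{mN} \| \check\theta \|_{L^1}$, and $\check\theta = \mc F^{-1}\theta$ is a Schwartz function, so its $L^1$ norm is a finite constant depending only on $\sigma$ and $\chi$. Combining with Young's inequality yields the claimed bound with $C = \| \check\theta \|_{L^1}$.

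I do not expect a serious obstacle here; the only point requiring mild care is the regularity of $\sigma$ at the origin, which is harmless precisely because the dyadic localization keeps us away from $\xi = 0$, so that $\theta$ is a genuine Schwartz (indeed $\mc D$) function and its inverse Fourier transform is integrable. One should also make sure the constant is genuinely $m$-independent, which the scaling argument guarantees since $\theta$ does not depend on $m$. If one prefers to avoid invoking the auxiliary function $\widetilde\varphi$, the same argument works by noting directly that $\dot\Delta_m \sigma(D) f = g_m' \ast \dot\Delta_m f$ with $g_m' = \mc F^{-1}[\varphi(2^{-m}\xi)\sigma(\xi)]$ applied to $\dot\Delta_m f$ — but then one must be slightly more careful that the symbol $\varphi(2^{-m}\xi)\sigma(\xi)$, while not compactly supported away from zero issues since $\varphi$ handles that, still needs the extra localization to make the convolution identity with $\dot\Delta_m f$ rather than $f$ legitimate; hence the cleaner route is via $\widetilde\varphi$ as above.
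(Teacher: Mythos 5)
The paper does not actually prove this lemma: it is stated as a classical fact (essentially Lemma 2.2 in \cite{BCD}), and your argument --- localization by an auxiliary annulus function, homogeneity to extract the factor $2^{mN}$ by rescaling, and Young's inequality --- is exactly the standard proof. The scaling computation and your remark on the regularity of $\sigma$ at the origin (harmless because the dyadic localization keeps the symbol supported away from $\xi=0$) are correct and are the two points that genuinely matter.

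There is, however, one bookkeeping error in your convolution identity. Writing $\varphi_m(\xi) = \varphi(2^{-m}\xi)$ and $\widetilde\varphi_m(\xi) = \widetilde\varphi(2^{-m}\xi)$, the operator $f \mapsto \mc F^{-1}\big[\varphi_m \sigma \widetilde\varphi_m\big] \ast \dot{\Delta}_m f$ has symbol $\varphi_m \sigma \widetilde\varphi_m \cdot \varphi_m = \sigma \varphi_m^2$ (using $\widetilde\varphi_m \equiv 1$ on ${\rm supp}\, \varphi_m$), whereas $\dot{\Delta}_m \sigma(D) f$ has symbol $\sigma \varphi_m$; since $\varphi_m^2 \neq \varphi_m$, the two sides are not equal, and your Young estimate then bounds a different quantity from the one in the statement. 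The fix is immediate: keep only one copy of the localization, i.e.\ write
\begin{equation*}
\dot{\Delta}_m \sigma(D) f = \mc F^{-1}\big[\sigma(\xi)\, \widetilde\varphi(2^{-m}\xi)\big] \ast \dot{\Delta}_m f,
\end{equation*}
which is legitimate precisely because $\widetilde\varphi_m \varphi_m = \varphi_m$. The function $\theta := \sigma \widetilde\varphi$ still lies in $\mc D$ (supported in the annulus $1/4 \leq |\xi| \leq 4$), homogeneity gives $\sigma(\xi)\widetilde\varphi(2^{-m}\xi) = 2^{mN}\theta(2^{-m}\xi)$, and the rest of your argument --- the change of variables yielding $\| g_m \|_{L^1} = 2^{mN}\| \mc F^{-1}\theta \|_{L^1}$ followed by Young's inequality --- goes through verbatim, with a constant depending only on $\sigma$ and $\chi$ as required.
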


Knowledge of the dyadic blocks formally allows to reconstruct any function: this is the Littlewood-Paley decomposition. Since the $\varphi_j$ form a partition of unity in $\R^d$, we formally have
\begin{equation}\label{eq:LPdecomposition}
{\rm Id} = \sum_{j \in \mathbb{Z}} \dot{\Delta}_j.
\end{equation}
However this identity cannot hold on the whole space $\mc S'$, as it obviously fails for polynomial functions\footnote{Inthe sequel, we will construct explicit examples of non-polynomial functions for which the Littlewood-Paley decomposition fails.} (but not only!). The questions we ask in this article are stongly linked to whether the series in \eqref{eq:LPdecomposition} converge or not for a given $f \in \mc S'$. In fact, the space $\mc S'_h$ is precisely defined as the subspace of $\mc S'$ for which decomposition \eqref{eq:LPdecomposition} holds.

\begin{defi}
Define $\mc S'_h$ as the space of those $f \in \mc S'$ such that 
\begin{equation}\label{eq:DefSh}
\chi (\lambda \xi) \what{f}(\xi) \tend_{\lambda \rightarrow + \infty} 0 \qquad \text{in } \mc S',
\end{equation}
where $\chi$ is the low frequency cut-off function defined above.
\end{defi}

\begin{rmk}
Multiple definitions of the space $\mc S'_h$ coexist. For example, the convergence \eqref{eq:DefSh} is sometimes required to be in the norm topology of $L^\infty$, as in \cite{BCD}. We follow Section 1.5.1 in \cite{Danchin-cours} and Definition 2.1.1 in \cite{Chemin-cours} and give a definition which is adapted to our context. We will give other definitions and discuss their equivalence in Section \ref{s:c2EQSpH}, as well as provide a number of examples of $\mc S'_h$ functions.
\end{rmk}

\subsection{Homogeneous Besov Spaces}

In this paragraph, we define the other object of interest for this paper: homogeneous Besov spaces. These are based on the homogeneous Littlewood-Paley decomposition \eqref{eq:LPdecomposition}.

\begin{defi}\label{d:c2SpH}
We define the \emph{homogeneous Besov space} $\dot{B}^s_{p, r}$ as the set of those classes of distributions modulo polynomials $f \in \mc S' / \, \R[X]$ such that
\begin{equation*}
\| f \|_{\dot{B}^s_{p, r}} = \left\| \left( 2^{ms} \| \dot{\Delta}_m f \|_{L^p} \right)_{m \in \mathbb{Z}} \right\|_{\ell^r(\mathbb{Z})} < +\infty.
\end{equation*}
\end{defi}

Using spaces of distributions defined modulo polynomials is nigh on impossible in certain contexts such as non-linear PDEs. In those situations, it is far better to work with givien representatives. These are readily available under certain conditions: observe that, thanks to the Bernstein inequalities (see Lemma 2.1 in \cite{BCD}), we have, for any $p \in [1, + \infty]$ and $s \in \R$,
\begin{equation*}
\sum_{- \infty}^{0} \big\| \dot{\Delta}_j f \big\|_{L^\infty} \leq C \sum_{- \infty}^0 \big\| \dot{\Delta}_j f \big\|_{L^p} 2^{jd/p} \leq C \sum_{- \infty}^0 2^{js} \big\| \dot{\Delta}_j f \big\|_{L^p} 2^{j(-s + d/p)},
\end{equation*}
so that the sum is convergent if $f \in \B$ under the condition 
\begin{equation}\label{eq:superCrit}
s < \frac{d}{p} \qquad \text{or} \qquad s = \frac{d}{p} \text{ and } r = 1.
\end{equation}
In that case, the Littlewood-Paley decomposition \eqref{eq:LPdecomposition} is convergent, the high frequency $j \geq 1$ part having a limit in $\mc S'$, and the low frequency part $j \leq 0$ converging normally in $L^\infty$. The limit $\sum_{j \in \Z} \dot{\Delta}_j f$ lies in the space $\mc S'_h$ and is equal to $f$ up to the addition of a polynomial. Therefore, the homogeneous Littlewood-Paley decomposition defines an isomorphism of Banach spaces\footnote{In other words, a linear isomorphism that is bounded together with its inverse.} onto
\begin{equation*}
\dot{\mathfrak{B}}^s_{p, r} = \Big\{ f \in \mc S'_h, \quad \| f \|_{\B} < + \infty \Big\}.
\end{equation*}
Conversely, under condition \eqref{eq:superCrit}, any element $f \in \dot{\mathfrak{B}}^s_{p, r}$ can be assigned a representative modulo polynomials in $\B$.

\medskip

The space $\dot{\mathfrak{B}}^s_{p, r}$ is called a realization of $\B$ as a subspace of $\mc S'_h$. While we do not dwell on the possibility of realizing homogeneous Besov spaces, we refer to the work \cite{Bourdaud} of Bourdaud for a presentation of the topic.

\medskip

In the case where condition \eqref{eq:superCrit} does not hold, that is for supercritical Besov spaces, there is no reason for the Littlewood-Paley decomposition of an element of $\B$ to converge, and $\B$ can no longer be realized as a subspace of $\mc S'_h$. We will prove this fact precisely in the sequel, in addition to examining how different $\dot{\mathfrak{B}}^s_{p, r}$ may be from $\B$.

\medskip

A fact that will be useful later on is that, for fixed exponents $p, r \in [1, + \infty]$, homogeneous Besov spaces of all regularity exponents are isomorphic (see Theorem 3.17 in \cite{YS}).
\begin{prop}\label{p:c2LapIsom}
Let $s, \sigma \in \R$ and $p, r \in [1, +\infty]$. Then the fractional Laplace operator defines an isomorphism of Banach spaces $(- \Delta)^{\sigma/2} : \B \tend \dot{B}^{s - \sigma}_{p, r}$.
\end{prop}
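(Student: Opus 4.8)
The plan is to prove Proposition \ref{p:c2LapIsom} by a direct computation on the Littlewood--Paley blocks, using the Fourier multiplier lemma stated above. The operator $(-\Delta)^{\sigma/2}$ is the Fourier multiplier of symbol $|\xi|^\sigma$, which is homogeneous of degree $\sigma$; however it is not a polynomial symbol, so one must first check that it is well defined on the quotient $\mc S'/\R[X]$ and maps classes to classes. Since $|\xi|^\sigma$ vanishes nowhere except possibly at the origin, a clean way to make sense of it is to define it blockwise: for $f \in \mc S'/\R[X]$ one sets $(-\Delta)^{\sigma/2} f$ to be (the class of) any distribution whose dyadic blocks are $\dot\Delta_m\big((-\Delta)^{\sigma/2}f\big) = |D|^\sigma \dot\Delta_m f$, the latter being a genuine tempered distribution because $\dot\Delta_m f$ is spectrally supported in an annulus away from $0$, where $|\xi|^\sigma$ is smooth. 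One then checks this is independent of the representative (polynomials have Fourier transform supported at $\{0\}$, hence are killed by every $\dot\Delta_m$) and compatible with the Besov norm bookkeeping.

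The heart of the matter is the two-sided norm estimate. First I would observe that, applying the Fourier multiplier Lemma with $\sigma(\xi) = |\xi|^\sigma$ (homogeneous of degree $\sigma$, although $\sigma$ need not be an integer — see the remark below) to the function $\dot\Delta_m f \in L^p$, one gets
\begin{equation*}
\big\| \dot\Delta_m (-\Delta)^{\sigma/2} f \big\|_{L^p} = \big\| \dot\Delta_m |D|^\sigma \dot{\widetilde\Delta}_m f \big\|_{L^p} \leq C\, 2^{m\sigma} \big\| \dot{\widetilde\Delta}_m f \big\|_{L^p},
\end{equation*}
where $\dot{\widetilde\Delta}_m = \dot\Delta_{m-1} + \dot\Delta_m + \dot\Delta_{m+1}$ is a fattened block satisfying $\dot\Delta_m \dot{\widetilde\Delta}_m = \dot\Delta_m$ (this is needed because $|\xi|^\sigma$ is only defined away from $0$, so one should only feed it inputs that are already spectrally localized; the Lemma as stated applies to $L^p$ functions, and $\dot{\widetilde\Delta}_m f$ is such). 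Multiplying by $2^{m(s-\sigma)}$ gives $2^{m(s-\sigma)} \|\dot\Delta_m (-\Delta)^{\sigma/2} f\|_{L^p} \leq C\, 2^{ms}\|\dot{\widetilde\Delta}_m f\|_{L^p}$, and taking the $\ell^r(\Z)$ norm, using the triangle inequality to absorb the three shifted copies, yields $\|(-\Delta)^{\sigma/2} f\|_{\dot B^{s-\sigma}_{p,r}} \leq C \|f\|_{\dot B^s_{p,r}}$. The reverse inequality is obtained in exactly the same way applying $(-\Delta)^{-\sigma/2}$ (multiplier $|\xi|^{-\sigma}$, homogeneous of degree $-\sigma$), which is a genuine two-sided inverse because $|\xi|^\sigma |\xi|^{-\sigma} = 1$ away from the origin and hence the composition is the identity on each block. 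Continuity of both maps together with the identity $(-\Delta)^{-\sigma/2}(-\Delta)^{\sigma/2} = \Id = (-\Delta)^{\sigma/2}(-\Delta)^{-\sigma/2}$ on the blocks gives the isomorphism.

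The main obstacle I anticipate is not the estimate itself — which is essentially one line given the Fourier multiplier Lemma — but the bookkeeping needed to make the operator well defined at the level of the quotient $\mc S'/\R[X]$, and the minor point that the Lemma is stated for \emph{integer} homogeneity degree $N\in\Z$ whereas $\sigma$ is real. For the latter, one checks that the proof of that Lemma (which proceeds by writing $\dot\Delta_m \sigma(D) = \dot\Delta_m \sigma(D)\dot{\widetilde\Delta}_m$ and recognizing $\dot\Delta_m \sigma(D)$ as convolution with $2^{md}\,\check\theta(2^m\cdot)$ for a fixed Schwartz function $\theta(\xi) = 2^{-mN}\varphi(\xi)\sigma(2^m\xi)$ independent of $m$ by homogeneity, whose $L^1$ norm is scale-invariant) goes through verbatim for any real $N$, since $|\xi|^N \varphi(\xi)$ is smooth and compactly supported in an annulus regardless of whether $N$ is an integer. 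For the former, the cleanest route is to note that $\Delta : \B \to \dot B^{s-2}_{p,r}$ is a bijection for integer steps by the standard theory and that one only truly needs, for the applications in this paper, the existence of \emph{some} fractional-step isomorphism; but the blockwise definition sketched above handles the general real case directly, and I would simply record that a representative of $(-\Delta)^{\sigma/2}f$ in $\mc S'$ is furnished by $\sum_{m\leq 0}|D|^\sigma\dot\Delta_m f + \text{(high frequency part convergent in }\mc S')$ whenever this makes sense, and otherwise work purely with the class, which is all Definition \ref{d:c2SpH} requires.

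Remark: the paper's Lemma is phrased for $N\in\Z$, so if one prefers to avoid re-examining its proof one can alternatively reduce to integer steps. Writing $\sigma = 2k + \sigma'$ with $k\in\Z$ and $|\sigma'|$ small, the integer part $(-\Delta)^k$ is handled by iterating the stated Lemma, and for the remaining small real exponent one invokes that $|\xi|^{\sigma'}\varphi(\xi)$, being smooth and supported away from the origin, still yields a Fourier multiplier bounded on every $L^p$ uniformly after dyadic rescaling — which is precisely the content one needs. Either way the conclusion is the stated isomorphism $(-\Delta)^{\sigma/2} : \dot B^s_{p,r} \to \dot B^{s-\sigma}_{p,r}$.
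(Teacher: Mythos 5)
The paper does not actually prove this Proposition: it is quoted as a known fact with a pointer to Theorem 3.17 of Sawano's survey \cite{YS}, so there is no internal argument to compare yours against. What you propose is the standard self-contained proof, and it is essentially correct: the two-sided estimate via the fattened blocks $\dot{\widetilde\Delta}_m$, the observation that the multiplier Lemma's proof only uses smoothness of $\sigma(\xi)\varphi(\xi)$ away from the origin and scale invariance of the $L^1$ norm of the rescaled kernel (so integrality of the degree $N$ is never used), the fact that polynomials are annihilated by every dyadic block so the operator descends to $\mc S'/\,\R[X]$, and the exact inverse $|\xi|^{-\sigma}$ on each block. This buys a proof from first principles where the paper only has a citation, at the cost of about a page of bookkeeping.

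The one point you should tighten is the well-definedness of the image as an honest element of $\mc S'/\,\R[X]$ rather than merely a sequence of spectrally localized blocks with the right $\ell^r(\Z)$ bookkeeping: you need the series $\sum_{m}|D|^\sigma\dot\Delta_m f$ to converge in the quotient, and your parenthetical ``whenever this makes sense, and otherwise work purely with the class'' leaves this to the reader. Precisely because the paper is about supercritical spaces, the low-frequency tail need not converge in $\mc S'$, so ``working with the class'' requires an argument — either subtracting suitable Taylor polynomials from the low-frequency blocks, or (cleaner) identifying $\mc S'/\,\R[X]$ with the dual of the space $\mc S_\infty$ of Schwartz functions with all vanishing moments, on which $|\xi|^\sigma$ acts continuously since $|\xi|^\sigma\what\phi(\xi)$ remains Schwartz with all moments vanishing when $\what\phi$ vanishes to infinite order at the origin; testing against such $\phi$ makes both tails of the series rapidly convergent. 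With that supplied, your argument is complete; the reduction to integer steps in your final remark is then unnecessary.
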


\medskip

We end this paragraph by stating duality properties which the Besov spaces inherit from the Lebesgue ones. We refer to \cite{Peetre}, Theorem 12 in Chapter 3 pp. 74-75 for a proof.\begin{thm}\label{t:c2BesovDuality}
Let $s \in \R$ and $p, r \in [1, +\infty[$. Then the topological dual of $\B$ is isomorphic to $\dot{B}^{-s}_{p', r'}$ as a Banach space, where $p'$ and $r'$ are the conjugated exponents of $p$ and $r$.
\end{thm}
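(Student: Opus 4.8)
\medskip

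\noindent\textit{A possible proof.} The plan is to realize $\B$ as a retract of the vector-valued sequence space $E := \ell^r(\Z; L^p)$ and to transport the elementary duality $E' = \ell^{r'}(\Z; L^{p'})$ --- which holds \emph{precisely because} $p < +\infty$ (so that $(L^p)' = L^{p'}$) and $r < +\infty$ (so that $(\ell^r)' = \ell^{r'}$). Write $S_j := \dot{\Delta}_{j-1} + \dot{\Delta}_j + \dot{\Delta}_{j+1}$; since the symbols $\varphi(2^{-k}\,\cdot\,)$ add up to $1$ on $\Supp \varphi(2^{-j}\,\cdot\,)$ --- where the only non-zero ones are those with $|k - j| \le 1$ --- one has the key identity $S_j \dot{\Delta}_j = \dot{\Delta}_j S_j = \dot{\Delta}_j$. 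First I would introduce
\begin{equation*}
\iota : \B \longrightarrow E, \quad \iota f := \big( 2^{js} \dot{\Delta}_j f \big)_{j \in \Z}, \qquad \pi : E \longrightarrow \B, \quad \pi\big( (u_j)_j \big) := \sum_{j \in \Z} 2^{-js}\, S_j u_j ,
\end{equation*}
and check that $\iota$ is an isometry onto its range (immediate from the definition of the Besov norm), that the series defining $\pi$ converges in $\mc S'/\R[X]$ with $\| \pi(u) \|_{\B} \lesssim \| u \|_E$ (by the standard reconstruction estimate for sequences of spectrally localized functions), and that $\pi \circ \iota = \Id$ on $\B$ --- this last point using $S_j \dot{\Delta}_j = \dot{\Delta}_j$ together with the Littlewood--Paley reconstruction $\sum_j \dot{\Delta}_j f = f$ in $\mc S'/\R[X]$, which holds for every $f \in \B$. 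In particular $\B$ is a retract of $E$, hence complete.

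\medskip

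Next I would define $\Phi : \dot{B}^{-s}_{p',r'} \longrightarrow (\B)'$ by
\begin{equation*}
\Phi(g)(f) := \sum_{j \in \Z} \big\langle \dot{\Delta}_j g, S_j f \big\rangle = \sum_{|j - k| \le 1} \big\langle \dot{\Delta}_j g, \dot{\Delta}_k f \big\rangle .
\end{equation*}
As every dyadic block annihilates polynomials, the right-hand side does not depend on the representatives chosen for $g$ and $f$; and estimating $\| S_j f \|_{L^p} \lesssim \sum_{|k - j| \le 1} \| \dot{\Delta}_k f \|_{L^p}$ and then applying Hölder's inequality in $x$ (with exponents $p', p$) and in the index $j$ (with exponents $r', r$), after inserting the weights $2^{js} \cdot 2^{-js}$, yields $|\Phi(g)(f)| \lesssim \| g \|_{\dot{B}^{-s}_{p',r'}} \| f \|_{\B}$. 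Hence $\Phi$ is linear and bounded. It is injective: if $\Phi(g) = 0$, testing against Schwartz functions $\psi$ whose Fourier transform is supported in a single annulus $\Supp \varphi(2^{-m}\,\cdot\,)$ --- for which $S_m \psi = \psi$ --- gives $\langle g, \psi \rangle = \Phi(g)(\psi) = 0$; since these annuli cover $\R^d \setminus \{ 0 \}$ as $m$ runs over $\Z$, every representative of $g$ has its Fourier transform supported at the origin, so $g$ is a polynomial and therefore $0$ in $\dot{B}^{-s}_{p',r'}$.

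\medskip

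The core of the argument is surjectivity. Let $\ell \in (\B)'$. Since $p, r < +\infty$ we have $E' = \ell^{r'}(\Z; L^{p'})$, so the bounded functional $\ell \circ \pi \in E'$ is represented by a sequence $(h_j)_j \in \ell^{r'}(\Z; L^{p'})$, i.e. $(\ell \circ \pi)\big( (u_j)_j \big) = \sum_j \langle h_j, u_j \rangle$; composing with $\iota$ and using $\pi \circ \iota = \Id$ gives $\ell(f) = \sum_j 2^{js} \langle h_j, \dot{\Delta}_j f \rangle$ for all $f \in \B$. I would then set $g := \sum_j 2^{js}\, \dot{\Delta}_j h_j$, which belongs to $\dot{B}^{-s}_{p',r'}$ with $\| g \|_{\dot{B}^{-s}_{p',r'}} \lesssim \| (h_j)_j \|_{\ell^{r'}(\Z; L^{p'})}$ by the reconstruction estimate. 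Expanding $\dot{\Delta}_j g = \sum_{|i - j| \le 1} 2^{is}\, \dot{\Delta}_j \dot{\Delta}_i h_i$, using the self-adjointness of the dyadic blocks, rearranging the absolutely convergent double series, and invoking $\dot{\Delta}_i S_i = \dot{\Delta}_i$ one last time, one finds
\begin{equation*}
\Phi(g)(f) = \sum_{i \in \Z} 2^{is} \big\langle h_i, \dot{\Delta}_i f \big\rangle = \ell(f), \qquad f \in \B .
\end{equation*}
Thus $\Phi$ is a continuous linear bijection between the Banach spaces $\dot{B}^{-s}_{p',r'}$ and $(\B)'$ --- the former being complete by the retract construction above, applied with the exponents $-s$, $p'$, $r'$ --- and the open mapping theorem promotes it to an isomorphism of Banach spaces.

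\medskip

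The main difficulty, as I see it, is concentrated in the identification $E' = \ell^{r'}(\Z; L^{p'})$ feeding the surjectivity step: this is exactly the point that breaks down for $p = +\infty$ (since $(L^\infty)' \ne L^1$) or $r = +\infty$ (since $(\ell^\infty)' \ne \ell^1$), and it is the genuine reason for the hypothesis $p, r \in [1, +\infty[$. The remaining ingredients --- the retraction identities, the boundedness of $\Phi$, the injectivity, and the justification of the interchanges of summation (all of which follow from the absolute $\ell^{r'}$--$\ell^r$ summability established along the way) --- are routine once the spectral localization $\dot{\Delta}_j \dot{\Delta}_k = 0$ for $|j - k| \ge 2$ is used systematically.
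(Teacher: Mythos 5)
Your argument is correct, and it is the standard ``retract'' proof of Besov duality: realize $\B$ as a complemented subspace of $\ell^r(\Z;L^p)$ via $\iota$ and $\pi$, transport the elementary duality of the vector-valued sequence space, and recover the representing distribution by reassembling the blocks $h_j$. The paper itself does not prove this theorem --- it only states it and refers to Peetre (Theorem 12, Chapter 3), whose proof proceeds by exactly this retraction onto $\ell^r_s(L^p)$ --- so you have in effect reconstructed the proof of the cited reference rather than taken a different route. The only step worth flagging is the identity $\pi\circ\iota=\Id$, which rests on the convergence $\sum_j \dot{\Delta}_j f = f$ in $\mc S'/\,\R[X]$ for every $f$ of finite Besov norm; this is true (it is the identification of $\mc S'/\,\R[X]$ with the dual of the Schwartz functions all of whose moments vanish, on which the Littlewood--Paley series of a test function converges), but it is the one ingredient that is not a purely formal manipulation of the relations $\dot{\Delta}_j\dot{\Delta}_k=0$ for $|j-k|\ge 2$, and a complete write-up should cite or prove it. You also correctly isolate where $p,r<+\infty$ enters, namely the identification $E'=\ell^{r'}(\Z;L^{p'})$ used for surjectivity.
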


\section{General Remarks on $\mc S'_h$}\label{s:c2EQSpH}

When we introduced the space $\mc S'_h$ in Definition \ref{d:c2SpH}, we noted that many different definitions coexist. This paragraph is aimed at discussing the various definitions and the way they interact. We give four of them:

\begin{enumerate}[(i)]
\item Above, we have defined $\mc S'_h$ as being the space of tempered distributions $f \in \mc S'$ that fulfill a low frequency condition
\begin{equation}\label{eq:c2defSpHWeakConvergence}
\chi(\lambda D)f \tend_{\lambda \rightarrow + \infty} 0 \qquad \text{in } \mc S'.
\end{equation}
This definition is particularly interesting with regards to homogeneous Littlewood-Paley theory: the space $\mc S'_h$ is closely related to\footnote{Strictly speaking, the convergence of the Littlewood-Paley decomposition is only equivalent to the weaker condition $\chi(2^{-j} D) f \tend 0$ as $j \rightarrow - \infty$. Convergence to zero of the subsequence does not imply the full convergence when $\lambda \rightarrow + \infty$. But this really is a technicality.} the set of distributions such that the series $\sum_{j \in \Z} \dot{\Delta}_j f$ converges to $f$ in $\mc S'$.

\item Alternatively, one may require the convergence above to take place in a stronger topology: as in \cite{BCD}, one could define $\mc S'_h$ as being the set of $f \in \mc S'$ such that
\begin{equation}\label{eq:c2defSpHStrongConvergence}
\chi(\lambda D)f \tend_{\lambda \rightarrow + \infty} 0 \qquad \text{in } L^\infty.
\end{equation}
The purpose of this definition is mainly to capture realizations of subcritical homogeneous Besov spaces: if the space $\B$ is subcritical, that is if the triplet $(s, p, r)$ satisfies \eqref{eq:superCrit}, then, for all $f \in \B$, the series $\sum_{j \leq 0} \dot{\Delta} f$ converges in $L^\infty$ and the function $\sum_{j \in \Z} \dot{\Delta}_j f = f \; ({\rm mod} \;  \R[X])$ satisfies \eqref{eq:c2defSpHStrongConvergence}.

\item Bourdaud \cite{Bourdaud} introduced the set of distributions that tend to zero at infinity: that is all the $f \in \mc D'$ such that $f(\lambda x) \tend 0$ as $\lambda \rightarrow + \infty$ and in the $\mc D'$ topology. In other words, for all $\phi \in \mc D$,
\begin{equation*}
\left\langle f(x), \frac{1}{\lambda^d} \phi \left( \frac{x}{\lambda} \right) \right\rangle_{\mc D' \times \mc D} \tend \; 0 \qquad \text{as } \lambda \rightarrow + \infty.
\end{equation*}
The intuitive meaning of this convergence is that $f$ has no ``average value''. For instance, any compactly supported distribution tends to zero at infinity. In \cite{Bourdaud}, it is shown that $\dot{B}^0_{\infty, 1}$ can be realized as a space of distributions tending to zero at infinity, and therefore so can all subcritical homogeneous Besov spaces.

\item Finally, we may impose on a $f \in \mc S'$ a condition using the heat kernel:
\begin{equation}\label{eq:c2DefSpHHeatKernel}
e^{t \Delta} f \tend_{t \rightarrow + \infty} 0 \qquad \text{in } \mc S'.
\end{equation}
This condition, although phrased slightly differently, is very similar to \eqref{eq:c2defSpHWeakConvergence}, the difference being that the heat kernel is not spectrally supported in a compact set. The convergence \eqref{eq:c2DefSpHHeatKernel} aims at eliminating all harmonic components from a given tempered distribution.
\end{enumerate}

\begin{ex}\label{ex:c1TFloc}
Let us give a few examples. First of all, any tempered distribution whose Fourier transform is integrable in a neighborhood of $\xi = 0$ is in $\mc S'_h$ according to (ii), which is the strongest of the definitions above: let $f \in \mc S'$ be such a distribution, because $\chi(\lambda \xi)$ is supported in a ball of radius $O(\lambda^{-1})$, we have
\begin{equation*}
\chi(\lambda \xi) \what{f}(\xi) \tend_{\lambda \rightarrow + \infty} 0 \qquad \text{in } L^1,
\end{equation*}
so that $\chi(\lambda D)f$ converges to zero in $L^\infty$. For example, any trigonometric polynomial with zero average value lies in $\mc S'_h$ according to definition (ii).

More generally, if $\what{f}$ is equal, on a neighborhood of $\xi = 0$, to a finite measure with no pure point component, then we also have $f \in \mc S'_h$ according to definition (ii).
\end{ex}

\begin{ex}\label{ex:c2C0}
Next, consider the space $C_0$ of continuous functions that tend to zero at $|x| \rightarrow + \infty$. Then $C_0 \subset \mc S'_h$, again according to definition (ii): let $f \in \mc C_0$ and $\epsilon > 0$, we may fix a $R > 0$ such that $|f(x)| \leq \epsilon$ for all $|x| \geq R$. Therefore $f$ is equal to the sum $f = g + h$ of a compactly supported function $g$ and function $h$ whose $L^\infty$ norm is smaller than $\| h \|_{L^\infty} \leq  \epsilon$, and so, by Example \ref{ex:c1TFloc} above,
\begin{equation*}
\| \chi(\lambda D)f(x) \|_{L^\infty} = \| \chi(\lambda D)h(x) \|_{L^\infty} + o(1) \lesssim \epsilon + o(1).
\end{equation*}
Now, since $C_0 \subset \mc S'_h$ in the sense of (ii), it is also true in the sense of (i).
\end{ex}

\begin{ex}\label{ex:c2Sign}
In contrast with the two above, another (more subtle) example may help us point out what differences exist between the various definitions above. Let $\sigma = \mathds{1}_{\R_+} - \mathds{1}_{\R_-}$ be the sign function. Then we have
\begin{equation*}
\chi(\lambda D) \sigma (x) = \int_{- \infty}^{+ \infty} \sigma(x - y) \psi_\lambda(y) {\rm d}y.
\end{equation*}
Here and in the sequel, we use the following notation: $\psi \in \mc S$ is a function such that $\what{\psi} =\chi$ and, for any function $g$ and $\lambda > 0$, we set $g_\lambda(x) = \lambda^{d} g(\lambda x)$. This form of $\chi(\lambda D)\sigma$ shows that the sign function cannot possibly satisfy \eqref{eq:c2defSpHStrongConvergence}, as dominated convergence provides\footnote{Recall that $\int \psi = \what{\psi}(0) = \chi(0) = 1$.} the limit $\psi_\lambda * \sigma (x) \tend \pm 1$ as $x \rightarrow \pm \infty$, so $\| \chi(\lambda D)\sigma \|_{L^\infty} = 1$. On the other hand, $\psi_\lambda * \sigma$ tends to zero uniformly locally\footnote{In other words, for every compact $K \subset \R^d$, the functions $(\psi_\lambda * \sigma) \mathds{1}_K$ tend to zero in $L^\infty$.}, and so it does in $\mc S'$. The same argument applies to show that $e^{t \Delta} \sigma \tend 0$ as $t \rightarrow + \infty$ (in $\mc S'$). Finally, since $\sigma$ is an odd function and $\psi_\lambda$ an even one, we have
\begin{equation*}
\big\langle \sigma(x), \psi_\lambda (-x) \big\rangle_{L^\infty \times L^1}  = 0.
\end{equation*}
However, it must be noted that despite the previous cancellation, the sign function $\sigma$ does not tend to zero at infinity in the sense of (iii). Taking a nonzero test function $\phi \in \mc D$ such that $\phi \leq 0$ in $\R_-$ and $\phi \geq 0$ in $\R_+$ gives
\begin{equation*}
\int_{- \infty}^{+ \infty} \sigma (\lambda x) \phi(x) \dx = \int_{- \infty}^{+ \infty} |\phi| \neq 0.
\end{equation*}
\end{ex}

\medskip

We will study the way definitions (i) and (iii) interact in the special case of $L^\infty(\R^d)$ with $d \geq 1$.

\begin{prop}\label{p:c2SpHequiv}
Consider $f \in L^\infty$. Consider $\psi \in \mc S$ such that $\what{\psi} = \chi$ and define, for $\lambda > 0$, the function $\psi_\lambda (x) = \lambda^{-d} \psi(\lambda^{-1}x)$. The following assertions are equivalent:
\begin{enumerate}[(i)]
\item we have $\chi (\lambda D) f \stackrel{*}{\wtend} 0$ in $L^\infty$, as $\lambda \rightarrow + \infty$;
\item we have $\big\langle f, \psi_\lambda \big\rangle_{L^\infty \times L^1} \tend 0$ as $\lambda \rightarrow + \infty$.
\end{enumerate}
\end{prop}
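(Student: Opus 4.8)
The plan is to exploit the scaling structure of the convolution $\chi(\lambda D)f = \psi_\lambda * f$ together with the fact that $f \in L^\infty$, so that all the relevant integrals are controlled by $\|f\|_{L^\infty}$ and the Schwartz decay of $\psi$. The implication (i)$\Rightarrow$(ii) should be essentially immediate: evaluating the weak-$*$ convergence $\chi(\lambda D)f \stackrel{*}{\wtend} 0$ against a well-chosen test function in $L^1$ gives the pointwise statement. Concretely, one has $(\psi_\lambda * f)(0) = \langle f(y), \psi_\lambda(-y)\rangle = \langle f, \check{\psi}_\lambda\rangle$, and since $\psi$ can be taken even (as $\chi$ is radial, $\psi = \mc F^{-1}\chi$ is even), $\check\psi_\lambda = \psi_\lambda$, so $(\psi_\lambda * f)(0) = \langle f, \psi_\lambda\rangle_{L^\infty \times L^1}$. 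Thus (ii) is just the statement that the value at the origin of $\chi(\lambda D)f$ tends to zero, which follows from (i) by testing against the $L^1$ function $\psi_\lambda$ — though here one must be slightly careful, because the test function itself depends on $\lambda$; I would instead argue that weak-$*$ convergence of $\chi(\lambda D)f$ to $0$, combined with a uniform bound $\|\chi(\lambda D)f\|_{L^\infty}\le C\|f\|_{L^\infty}$ and equicontinuity, forces pointwise convergence of $(\psi_\lambda*f)(x)$ at every fixed $x$; evaluating at $x=0$ yields (ii).

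For the harder direction (ii)$\Rightarrow$(i), the key observation is a \emph{semigroup-type identity}: the family $\psi_\lambda$ is an approximate-identity rescaling, and one can write, for $\lambda \le \mu$, a relation of the form $\psi_\mu = \psi_\lambda * K_{\lambda,\mu}$ for a suitable kernel, or more usefully, express the value $(\psi_\lambda * f)(x)$ at an arbitrary point $x$ in terms of values $(\psi_\mu * f)(0)$ at the origin for a continuum of scales $\mu$ and translates. The cleanest route is: fix $\phi \in L^1$; we must show $\langle \psi_\lambda * f, \phi\rangle \to 0$. Since trigonometric-type arguments are unavailable, I would reduce to $\phi$ in a dense class — say $\phi \in \mc S$ with $\what\phi$ compactly supported away from... no: rather, approximate $\phi$ in $L^1$ by finite linear combinations of translates of $\psi_\mu$ for $\mu$ large. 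The point is that the dilates and translates of $\psi$ (equivalently, since $\what{\psi_\mu}(\xi) = \chi(\mu\xi)$, functions whose Fourier transforms are the $\chi(\mu\xi)$) are total in $L^1$: their closed span contains all $\psi_\mu$, and as $\mu \to \infty$ these exhaust low frequencies, while translation generates the rest. Then $\langle \psi_\lambda * f, \tau_a \psi_\mu \rangle = (\psi_\lambda * \psi_\mu * f)(a)$, and for $\lambda$ large $\psi_\lambda * \psi_\mu$ is again (up to harmless error) a rescaled $\psi$ at the coarser scale, so this quantity is of the form $(\psi_\nu * f)(a)$ with $\nu = \nu(\lambda,\mu) \to \infty$; translating, $(\psi_\nu * f)(a) = \langle f, \tau_{-a}\psi_\nu\rangle$, and one needs the hypothesis (ii) \emph{uniformly in the translate}.

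This last point — upgrading (ii) from convergence of $\langle f, \psi_\lambda\rangle$ to convergence of $\langle f, \tau_a \psi_\lambda\rangle$ locally uniformly in $a$ — is where I expect the main obstacle to lie, since a priori (ii) only controls the untranslated bracket. The resolution should be that $(\psi_\lambda * f)(a)$, as a function of $a$, is bounded by $C\|f\|_{L^\infty}$ and has derivative $(\nabla\psi_\lambda * f)(a)$ of size $O(\lambda^{-1}\|f\|_{L^\infty}) \to 0$, hence becomes asymptotically constant in $a$ on any fixed compact set; combined with (ii) giving the value $0$ at $a=0$, this forces $(\psi_\lambda * f)(a) \to 0$ uniformly on compacts. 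Feeding this back into the density argument — and handling the tail of $\phi$ outside a large compact set using $\|\psi_\lambda * f\|_{L^\infty} \le C\|f\|_{L^\infty}$ and integrability of $\phi$ — completes the proof of $\langle \psi_\lambda * f, \phi\rangle \to 0$ for all $\phi \in L^1$, i.e. (i). I would write the $\psi_\lambda * \psi_\mu$ composition carefully (it is not exactly a single rescaled $\psi$, but $\what{\psi_\lambda}\what{\psi_\mu} = \chi(\lambda\xi)\chi(\mu\xi) = \chi(\mu\xi)$ once $\lambda \ge \mu$ since $\chi \equiv 1$ near $0$ — wait, only if $\chi(\lambda\xi)=1$ on $\supp\chi(\mu\cdot)$, which holds for $\lambda$ large relative to $\mu$), so in fact $\psi_\lambda * \psi_\mu = \psi_\mu$ exactly for $\lambda$ large enough, which makes the identity clean and removes the error terms entirely.
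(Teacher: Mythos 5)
Your proof is correct and rests on the same key mechanism as the paper's: since $\nabla(\psi_\lambda * f) = \lambda^{-1}(\nabla\psi)_\lambda * f$ has $L^\infty$ norm $O(\lambda^{-1}\|\nabla\psi\|_{L^1}\|f\|_{L^\infty})$, the function $\psi_\lambda * f$ is asymptotically constant on compact sets, so its value at the origin --- which is exactly the bracket $\langle f,\psi_\lambda\rangle_{L^\infty\times L^1}$ --- controls its pairing against any fixed $\phi\in L^1$ and conversely. The detour in (ii)$\Rightarrow$(i) through approximating $\phi$ by translates of the $\psi_\mu$ is unnecessary (locally uniform convergence plus the uniform bound $\|\psi_\lambda * f\|_{L^\infty}\le\|\psi\|_{L^1}\|f\|_{L^\infty}$ already yields weak-$*$ convergence against all of $L^1$), and its supporting identity is stated backwards: since $\what{\psi_\lambda}(\xi)=\chi(\lambda\xi)$ is supported in $|\xi|\lesssim\lambda^{-1}$, one has $\psi_\lambda*\psi_\mu=\psi_\lambda$ (not $\psi_\mu$) once $\lambda\gg\mu$ --- a harmless slip, since the direct gradient argument you give afterwards is the one that actually closes the proof.
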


\begin{proof}
The link between the two statements is this: the brackets of \textit{(ii)} can be seen as a particular value of a convolution product, namely (recall that $\chi$ and $\psi$ are radial functions)
\begin{equation*}
\psi_\lambda * f(0) = \left\langle f(y), \frac{1}{\lambda^d} \psi \left( \frac{-y}{\lambda} \right) \right\rangle_{L^\infty \times L^1}.
\end{equation*}
All we have to do is show that the value of $\psi_\lambda * f(x)$ cannot be too far from $\psi_\lambda * f(0)$. This is a consequence of the regularizing nature of $\psi_\lambda$. Since $\chi(\lambda D)$ is a low frequency cut-off, the function $\chi (\lambda D)f = \psi_\lambda * f$ is smooth (analytic in fact) with good estimates on its derivatives. Thus, a Taylor expansion is an appropriate way to study the difference between the value at $x$ and at zero of the convolution product:
\begin{equation*}
\begin{split}
\psi_\lambda * f(x) & = \psi_\lambda * f(0) + x \cdot \int_0^1 \nabla \psi_\lambda * f(tx) {\rm d}t = \psi_\lambda * f(0) + \frac{x}{\lambda} \cdot \int_0^1 (\nabla \psi)_\lambda * f(tx) {\rm d}t \\
& = \psi_\lambda * f(0) + O \left( \frac{|x|}{\lambda} \right),
\end{split}
\end{equation*}
where the constant in the $O(\, . \, )$ is $\| \nabla \psi \|_{L^1} \| f \|_{L^\infty}$. On the one hand, if \textit{(ii)} holds, then the previous equation shows that $\psi_\lambda * f$ converges locally uniformly to zero, thus giving \textit{(i)}. On the other, by fixing $\phi \in \mc S$, we obtain
\begin{equation*}
\left| \big\langle \psi_\lambda * f, \phi \big\rangle_{L^\infty \times L^1} - \psi_\lambda * f (0) \int \phi \; \right| \leq \frac{1}{\lambda} \| \nabla \psi \|_{L^1} \| f \|_{L^\infty} \int |x| |\phi (x)| \dx = O \left( \frac{1}{\lambda} \right),
\end{equation*}
so that the convergence of $\psi_\lambda * f(0)$ implies weak convergence of $\chi(\lambda D)f$ to zero.
\end{proof}

\begin{rmk}
Proposition \ref{p:c2SpHequiv} hints as to which ones of the definitions (i)-(iv) above are relatively stronger. For $f \in L^\infty$, then (ii) implies (iii), which implies both (i) and (iv).
\end{rmk}

\section{Closure of $\mc S'_h$ in Besov Spaces}\label{s:c1ClosB}

In this paragraph, we focus on the topological properties of the intersection $\mc S'_h \cap \B$. The following Proposition seems to be common knowledge (especially point \textit{(i)}), although we have been unable to locate a proof in the literature.

We point out that assertion \textit{(ii)} in Theorem \ref{t:c2closB} below finds a close counterpart in Proposition 2.27 and Remark 2.28 in \cite{BCD}. However, the authors of \cite{BCD} use a different definition of $\mc S'_h$.

\begin{thm}\label{t:c2closB}
Consider $(s, p, r) \in \mathbb{R}\times [1, + \infty]^2$ such that $\B$ is supercritical, that is $s > d/p$, or $s=d/p$ and $r > 1$, then the following assertions hold:
\begin{enumerate}[(i)]
\item the subspace $\mc S'_h \cap \B$ is not closed in $\B$;
\item the intersection $\mc S'_h \cap \B$ is dense in $\B$ if and only if $r < + \infty$.
\end{enumerate}
\end{thm}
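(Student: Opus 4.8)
The plan is to build, for each frequency scale, an explicit family of test distributions that witness the failure of closedness and the density dichotomy. Since by Proposition \ref{p:c2LapIsom} the operator $(-\Delta)^{\sigma/2}$ is an isomorphism between Besov spaces of different regularity, I would first reduce to a convenient value of $s$: in the strictly supercritical case $s>d/p$ one may as well treat $s - d/p$ arbitrarily small but positive, and in the borderline case $s=d/p$, $r>1$ one stays there. The key building block is a function $g$ whose Fourier transform is a pure point mass near the origin (say a suitable trigonometric-type profile, or more carefully $g$ with $\what g$ supported in an annulus $\{|\xi| \approx 1\}$ translated to be "seen" by $\dot\Delta_0$ but violating \eqref{eq:DefSh} after rescaling), so that $g \in \B$ but $g \notin \mc S'_h$. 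Rescaling $g$ to live at frequency $2^{-k}$ and normalizing in $\B$ gives a sequence $g_k$ with $\|g_k\|_{\B}\approx 1$, each $g_k \notin \mc S'_h$, but with the crucial property that $g_k$ is "spread over few dyadic blocks" so that for the $\ell^r$ norm with $r<\infty$ the contribution is controllable.

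For part \textit{(i)}, I would exhibit a single $f \in \B \setminus \mc S'_h$ that is a limit in $\B$ of elements of $\mc S'_h \cap \B$. Concretely, take $f = \sum_{k\ge 1} 2^{-k} a_k$ where each $a_k$ is a normalized $\mc S'_h$-bump at frequency $\approx 2^{-k}$ arranged so that the low-frequency cut-off $\chi(\lambda D)f$ does not go to $0$ in $\mc S'$ — the point being that $\mc S'_h$ is \emph{not} closed under the kind of infinite low-frequency summation that $\B$ permits in the supercritical regime. The partial sums $f_N = \sum_{k\le N} 2^{-k} a_k$ lie in $\mc S'_h \cap \B$ (finite sums of $\mc S'_h$ elements, still in $\B$), converge to $f$ in $\B$ because the tail $\sum_{k>N}2^{-k}$ is summable, yet $f\notin\mc S'_h$. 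This shows $\mc S'_h\cap\B$ is not closed. The delicate point is verifying $f\notin\mc S'_h$: one must check that the oscillating/cancellation structure that keeps each $a_k$ in $\mc S'_h$ is destroyed by the superposition, which is where the supercriticality ($\ell^r$ with the bad sign of $s-d/p$) is used.

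For part \textit{(ii)}, the "if" direction ($r<\infty$ $\Rightarrow$ density) is the heart of the matter: given $f\in\B$ with $r<\infty$, truncate its Littlewood-Paley series, $f^{(N)}=\sum_{|m|\le N}\dot\Delta_m f$. Each $f^{(N)}$ is spectrally supported away from a neighbourhood of $0$ (for the low cut-off) hence lies in $\mc S'_h$ (by Example \ref{ex:c1TFloc}, its Fourier transform vanishes near $0$), and it lies in $\B$; and $\|f-f^{(N)}\|_{\B}^r = \sum_{|m|>N} 2^{msr}\|\dot\Delta_m f\|_{L^p}^r \to 0$ precisely because the full sum is finite when $r<\infty$ — this is the tail of a convergent series. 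So $\mc S'_h\cap\B$ is dense. For the "only if" direction ($r=\infty$ $\Rightarrow$ not dense), I would produce $f\in\BB$ at positive distance from $\mc S'_h\cap\BB$: take $f$ with $2^{ms}\|\dot\Delta_m f\|_{L^p}\approx 1$ for all $m\le 0$ (a "constant low-frequency tail"), which is the obstruction that cannot be approximated since any $g\in\mc S'_h$ has $\chi(\lambda D)g\to 0$, forcing $\|\dot\Delta_m g\|$ to be small for $m\to-\infty$ along subsequences, so $\|f-g\|_{\BB}\gtrsim \limsup_{m\to-\infty}2^{ms}\|\dot\Delta_m(f-g)\|_{L^p}\gtrsim c>0$. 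The main obstacle I anticipate is making the last lower bound rigorous: one needs that membership in $\mc S'_h$ genuinely forces the low dyadic blocks $\dot\Delta_m g$ to be small (in $L^p$, weighted) as $m\to-\infty$ — the footnote in the excerpt flags that $\mc S'_h$ only controls the \emph{subsequence} $\chi(2^{-j}D)g\to 0$, but that is exactly what is needed here — combined with a careful choice of $f$ (for instance built from a lacunary sum of rescaled bumps, or a single homogeneous-type distribution like a suitable Riesz-potential kernel truncated) whose weighted low-frequency blocks stay bounded below.
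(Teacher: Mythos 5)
Your overall architecture (a lacunary sum of low-frequency bumps for non-closedness, truncation of the Littlewood--Paley series for density when $r<+\infty$) is the same as the paper's, and the $r<+\infty$ part is fine. But there are two genuine gaps. First, in part \textit{(i)} the geometric weights $2^{-k}$ do not work in general. If $a_k$ is a $\B$-normalized bump at frequency $2^{-k}$, its pairing with a test function $\phi$ satisfying $\what{\phi}\equiv 1$ near $0$ is of size $2^{k(s-d/p)}$; hence $\sum_k 2^{-k}\langle a_k,\phi\rangle$ converges absolutely whenever $s-d/p\leq 1$, and in particular \emph{always} in the critical case $s=d/p$, $r>1$. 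In that situation the partial sums converge in $\mc S'$, the limit $f$ lies in $\mc S'_h$, and the construction proves nothing. The real tension, which your sketch does not isolate, is that the coefficient sequence must lie in $\ell^r$ (to keep the $\B$ norm finite) while making the pairing series diverge; the paper achieves this with polynomial weights $|j|^{-\alpha}$, taking $\alpha=1$ when $s=d/p$ (where $r>1$ is used to get $\sum|j|^{-r}<\infty$ while $\sum|j|^{-1}=\infty$) and $\alpha=2$ when $s>d/p$ (where the exponential growth $2^{|j|(s-d/p)}$ forces divergence). The divergence is not a matter of ``oscillation being destroyed by superposition'': each summand is nonnegative after testing against $\phi$, and one simply checks the series of pairings diverges.

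Second, and more seriously, your argument for the ``only if'' direction of \textit{(ii)} rests on a false premise. You need that $g\in\mc S'_h\cap\BB$ forces $2^{ms}\|\dot{\Delta}_m g\|_{L^p}\to 0$ as $m\to-\infty$, but with the paper's definition of $\mc S'_h$ (convergence of $\chi(\lambda D)g$ to $0$ in $\mc S'$ only, not in $L^\infty$) this fails: the sign function $\sigma$ on $\R$ belongs to $\mc S'_h\cap\dot{B}^0_{\infty,\infty}$ yet $\|\dot{\Delta}_m\sigma\|_{L^\infty}=\|\dot{\Delta}_1\sigma\|_{L^\infty}>0$ for every $m$, by scale invariance. (The distinction you attribute to the footnote --- subsequence versus full limit --- is not the issue; the issue is weak versus norm convergence.) The paper circumvents this by testing the low-frequency blocks against a fixed $g\in L^1$ and proving
\begin{equation*}
\limss_{j\to-\infty}\left|\left\langle 2^{j(s-d/p)}\dot{\Delta}_j f, g\right\rangle_{L^\infty\times L^1}\right|\leq \|g\|_{L^1}\,\inf_{h\in\CC}\|f-h\|_{\BB},
\end{equation*}
which only requires $\dot{\Delta}_j h\to 0$ in $\mc S'$ for $h\in\mc S'_h\cap\BB$ together with Bernstein's inequality, and then exhibiting $f$ (a lacunary sum with coefficients $\equiv 1$) for which the left-hand side is bounded below. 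You would need to replace your norm lower bound by such a duality argument for the proof to close.
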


\begin{rmk}
In particular, with the above choice of exponents $(s, p, r)$, the space $\B$ cannot be realized as a subspace of $\mc S'_h$. In other words, there is no linear map $\sigma : \B \tend E$ to a subspace $E \subset \mc S'$ such that $E \subset \mc S'_h$ and the following diagram commutes:
\begin{equation*}
\xymatrix{
\B \ar[rr]^\subset \ar[dd]^\sigma & & \mc S' / \, \R[X] \\
& & \\
E \ar[rruu]_\pi
}
\end{equation*}
where in the above $\pi : E \subset \mc S' \tend \mc S' / \, \R[X]$ is the natural projection. Indeed, if that were the case, any function $f \in \B$ with $f \notin \mc S'_h \cap \B$ would define an element $\sigma(f) \in \mc S'_h$ which would be mapped to an element of $\mc S'_h \cap \B$ by $\pi$, this being a contradiction.
\end{rmk}

\begin{proof}[Proof (of Theorem \ref{t:c2closB})]
We start by showing point \textit{(i)}. The idea of the proof is to exhibit an element $\B$ which does not belong to $\mc S'_h$ (that is to its image in $\mc S' / \, \R[X]$). Let $\psi \in \mc S$ be a nonzero function with nonnegative Fourier transform $\what{\psi} \geq 0$ such that $\varphi(\xi) \what{\psi}(\xi) = \what{\psi}(\xi)$, where $\varphi$ is the Littlewood-Paley decomposition function \eqref{eq:LPdecomposition}, and define $\psi_j(x) = 2^{jd}\psi(2^{j}x)$ for $j \in \Z$ so that
\begin{equation}\label{eq:smallLPblock}
\dot{\Delta}_j \psi_j = \psi_j \qquad \text{and} \qquad \| \psi_j \|_{L^p} = 2^{jd\left( 1 - \frac{1}{p} \right)} \| \psi \|_{L^p}
\end{equation}
for all $p \in [1, + \infty]$. We define our function by
\begin{equation}\label{eq:CounterEx}
g = \sum_{- \infty}^{-1} 2^{-jd\left(1 - \frac{1}{p} \right)} 2^{-js} \frac{1}{|j|^\alpha} \psi_j,
\end{equation}
where $\alpha \in [1, + \infty]$ is chosen so that
\begin{equation*}
\alpha = 2 \text{ if } s > \frac{d}{p} \qquad \text{and} \qquad \alpha = 1 \text{ if } s = \frac{d}{p}.
\end{equation*}
Now, the Besov norm of $g$ is finite: we compute
\begin{equation*}
\begin{split}
\| g \|_{\B} & \approx \left( \sum_{-\infty}^{-1} \left( 2^{-jd\left(1 - \frac{1}{p} \right)} \| \psi_j \|_{L^p} \right)^r \frac{1}{|j|^{\alpha r}} \right)^{1/r} \\
& \approx \left( \sum_{- \infty}^{-1} \frac{1}{|j|^{\alpha r}} \right)^{1/r},
\end{split}
\end{equation*}
with the usual modification of taking the $\ell^\infty (\Z)$ norm if $r = + \infty$. By remembering that $r > 1$ if $s = d/p$, we see that this last sum is finite. In particular, we see that the series \eqref{eq:CounterEx} defining $g$ converges in $\B$, which is a Banach space. Therefore \eqref{eq:CounterEx} does indeed define an element of $\B$.

\medskip

We must now prove that $g \notin \mc S'_h \cap \B$, or in other words that the series defining $g$ does not converge in the $\mc S'$ topology. For this, we fix a $\phi \in \mc S$ such that $\what{\phi}(\xi) \equiv 1$ around $\xi = 0$ and we compute the partial sum
\begin{equation*}
\begin{split}
\left\langle \sum_{- N}^{-1} 2^{j-d\left(1 - \frac{1}{p} \right)} 2^{-js} \frac{1}{|j|^\alpha} \psi_j, \phi \right\rangle_{\mc S' \times \mc S} & = \sum_{-N}^{-1} 2^{j(d/p - s)} \frac{1}{|j|^\alpha} \int \what{\psi} (2^{-j}\xi) 2^{-jd} {\rm d} \xi \\
& \approx \sum_{-N}^{-1} 2^{j(d/p - s)} \frac{1}{|j|^\alpha}.
\end{split}
\end{equation*}
By choice of $s$ and $\alpha$, this last sum diverges as $N \rightarrow + \infty$, and therefore $g$ cannot be an element of $\mc S'_h \cap \B$. However, since the series \eqref{eq:CounterEx} is convergent in $\B$, the function $g$ is in the closure of $\mc S'_h \cap \B$, and so the intersection is not closed.

\medskip

We now get to point \textit{(ii)} of the Proposition. We focus on the case $r < +\infty$, since the case of $r = + \infty$ will be an immediate consequence of Theorem \ref{t:c2nonSepB} below (whose proof is entirely independent). It is simply a matter of noting that for any $f \in \B$, the series
\begin{equation*}
f_N := \sum_{-N}^\infty \dot{\Delta}_j f
\end{equation*}
lies in $\mc S'_h \cap \B$ and converges to $f$ in $\B$.
\end{proof}

\section{Non-Separability of the Quotient: the Case of Besov Spaces}\label{s:c1SepB}

Given Theorem \ref{t:c2closB} above, we see that the inclusion $\mc S'_h \cap \B \subsetneq \B$ is very nearly an equality if $r < + \infty$, but when $r = +\infty$ we have not yet examined the difference between $\mc S'_h \cap \BB$ and $\BB$. We make the following definition.

\begin{defi}
For any $p \in [1, +\infty]$ and $s \geq d/p$, we define $\CC$\index{$\Csp$@$\CC$} to be the closure of $\mc S'_h \cap \BB$ in the $\BB$ topology.
\end{defi}

At this point, we must note the importance the precise definition of $\mc S'_h$ plays in our study. When $\mc S'_h$ is defined in terms of the norm topology of $L^\infty$, as in point (ii) at the beginning of Section \ref{s:c2EQSpH}, then $\CC$ is the set of $f \in \BB$ such that
\begin{equation*}
2^{js} \dot{\Delta}_j f \tend_{j \rightarrow - \infty} 0 \qquad \text{in } L^\infty,
\end{equation*}
as explained in Remark 2.28 of \cite{BCD}. Things are not the same in our framework (see Definition \ref{d:c2SpH} above). We may be inspired by Example \ref{ex:c2Sign} to exhibit an element of $\mc S'_h$ such that the convergence above does not hold. Let $\sigma$ be, as in Example \ref{ex:c2Sign}, the sign function on $\R$. Then, by noting $\phi_j (x) = 2^j \phi(2^j x)$ the kernel of the operator $\dot{\Delta}_j$, we may change variables in the convolution integral to obtain
\begin{equation*}
\dot{\Delta}_j \sigma(x) = \int_{- \infty}^{+ \infty} \sigma(y) \phi(2^j x - 2^j y) 2^j {\rm d} y = \int_{- \infty}^{+ \infty} \sigma(y) \phi(2^j x - y) {\rm d} y = \dot{\Delta}_1 \sigma (2^j x),
\end{equation*}
so that $\| \dot{\Delta}_j \sigma \|_{L^\infty} = \| \dot{\Delta}_1 \sigma \|_{L^\infty} > 0$. As a consequence, the sign function defines an element of $\mc S'_h \cap \dot{B}^0_{\infty, \infty} \subset C^0_{\infty}$ for which the convergence above does not hold for the strong $L^\infty$ topology.

\medskip

The next Theorem states that the inclusion $\CC \subsetneq \BB$ is strict. In fact, Theorem \ref{t:c2nonSepB} does more than that, as it expresses the strict inclusion in terms of the size of the quotient $\BB / \CC$, which is not separable.

\begin{thm}\label{t:c2nonSepB}
Let $p \in [1, + \infty]$ and $s \geq d/p$. The quotient space $\BB / \CC$ is not separable. In fact, there exists an embedding $J : \ell^\infty \tend \BB$ which defines a quasi-isometry between the (non-separable) quotient spaces $J' : \ell^\infty / c_0 \tend \BB / \CC$.
\end{thm}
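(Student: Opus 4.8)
\emph{The plan.} I would transport the non-separability of $\ell^\infty/c_0$ along a well chosen isometry $J:\ell^\infty\tend\BB$, check that $J$ maps $c_0$ into $\CC$, and then establish a lower bound on the induced quotient map, which is the real content of the statement. Concretely, fix as in the proof of Theorem \ref{t:c2closB} a function $\psi\in\mc S$ with $\what\psi\geq0$, $\what\psi\not\equiv0$ and $\Supp\what\psi$ contained in the open region where $\varphi\equiv1$, so that $\dot\Delta_m\psi_j=\psi_j$ if $m=j$ and $\dot\Delta_m\psi_j=0$ otherwise, where $\psi_j(x)=2^{jd}\psi(2^jx)$. Set $e_n:=\theta_n\psi_{-n}$ with $\theta_n>0$ chosen so that $2^{-ns}\|e_n\|_{L^p}=1$; then each $e_n$ is a single dyadic block, hence lies in $\mc S'_h\cap\BB$ with $\|e_n\|_{\BB}=1$, and the $e_n$ have pairwise disjoint spectral supports. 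Given $a=(a_n)_{n\geq1}\in\ell^\infty$, let $J(a)\in\BB\subset\mc S'/\R[X]$ be the class of $R_a:=\sum_{n\geq1}a_n(e_n-P_n)$, where $P_n$ is the Taylor polynomial of $e_n$ at the origin of degree $N$ for some fixed $N>s-d/p$. Using the vanishing moments of $\psi$ exactly as in the estimate of Theorem \ref{t:c2closB}, one gets $|\langle e_n-P_n,\phi\rangle|\lesssim_\phi 2^{n(s-d/p-N)}$ for $\phi\in\mc S$, so the series converges absolutely in $\mc S'$; since $\dot\Delta_{-n}R_a=a_ne_n$ and $\dot\Delta_mR_a=0$ for $m\geq0$, we obtain $\|J(a)\|_{\BB}=\sup_n|a_n|=\|a\|_{\ell^\infty}$, i.e. $J$ is a linear isometry.

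\emph{Descent to the quotient.} If $a\in c_0$, the partial sums $\sum_{n\leq M}a_ne_n$ lie in $\mc S'_h\cap\BB$ (finite sums of elements of the vector space $\mc S'_h$) and converge to $J(a)$ in $\BB$, because the $\BB$-norm of the tail equals $\sup_{n>M}|a_n|\tend0$. Hence $J(c_0)\subset\CC$, so $J$ descends to a bounded map $J':\ell^\infty/c_0\tend\BB/\CC$ with $\|J'\|\leq1$.

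\emph{The lower bound (main obstacle).} What remains — and this is the heart of the matter — is to show $\|J'[a]\|_{\BB/\CC}\gtrsim\|[a]\|_{\ell^\infty/c_0}=\limsup_n|a_n|$, equivalently $\|J(a)-g\|_{\BB}\gtrsim\limsup_n|a_n|$ for every $g\in\mc S'_h\cap\BB$. The natural strategy is to produce a uniformly bounded family $(\Lambda_n)_{n\geq1}\subset(\BB)'$ with $\Lambda_n\circ J=\mathrm{pr}_n$ (the $n$-th coordinate) and $\Lambda_n(g)\tend0$ for every $g\in\mc S'_h\cap\BB$: then for any free ultrafilter $U$ on $\N$ the functional $\Phi_U:=\lim_{n\to U}\Lambda_n$ is bounded, vanishes on $\CC$, and satisfies $\Phi_U(J(a))=\lim_{n\to U}a_n$; taking the supremum over $U$ gives $\limsup_n|a_n|=\sup_U|\Phi_U(J(a))|\leq(\sup_n\|\Lambda_n\|)\,\mathrm{dist}_{\BB}(J(a),\CC)$. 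Two candidates for $\Lambda_n$ present themselves: (a) testing the block, $\Lambda_n(f)=c\,\theta_n^{-1}\langle\dot\Delta_{-n}f,\psi(2^{-n}\cdot)\rangle$, and (b) evaluating the block at the origin, $\Lambda_n(f)=\psi(0)^{-1}\|\psi\|_{L^p}\,2^{-n(s-d/p)}\,\dot\Delta_{-n}f(0)$ (which is uniformly bounded on $\BB$ by Bernstein's inequality, since $\dot\Delta_{-n}f$ is spectrally localized at frequency $\approx2^{-n}$, and satisfies $\Lambda_n\circ J=\mathrm{pr}_n$ because $e_n(0)=\theta_n2^{-nd}\psi(0)$). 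Both clearly recover the coordinates of $J$; the delicate point is the asymptotic vanishing on $\mc S'_h\cap\BB$. Here the precise, \emph{weak}-$\mc S'$ formulation of the definition of $\mc S'_h$ must be used: one writes $\dot\Delta_{-n}$ as a difference of two low-frequency cut-offs $\chi(\lambda D)$ with $\lambda\approx2^n$, and for option (b) observes that $\dot\Delta_{-n}g(0)$ is then a difference of quantities $\langle g,\psi^\sharp_\lambda\rangle$ of the type appearing in Proposition \ref{p:c2SpHequiv}, which tend to zero as soon as $g$ lies in $\mc S'_h$ (together with a mild integrability of $g$, e.g. in the borderline case $s=d/p$ where $\mc S'_h\cap\BB$ sits inside $L^\infty$-type spaces). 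The obstruction to a clean argument is that membership in $\mc S'_h$ only controls the pairing of $\chi(\lambda D)g$ against \emph{fixed} Schwartz functions, whereas the detecting functionals involve test profiles living at scale $2^n$; reconciling the two — presumably by combining the weak convergence with the uniform $\BB$-bound on the dyadic octaves, or by a rescaling/compactness argument — is where the proof's difficulty concentrates, and it is also the place where the examples of the paper (the sign function and its rescalings, which lie in $\mc S'_h$ yet have non-decaying blocks) show that no naive choice of $\Lambda_n$ can work.

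\emph{Conclusion.} Once the lower bound $\|J'[a]\|_{\BB/\CC}\gtrsim\limsup_n|a_n|$ is secured, $J'$ is a quasi-isometry onto its image. Applying it to the classes $[\mathds{1}_{A_\alpha}]$, where $(A_\alpha)_{\alpha}$ is an uncountable almost-disjoint family of infinite subsets of $\N$, one gets $\|J'[\mathds{1}_{A_\alpha}]-J'[\mathds{1}_{A_\beta}]\|_{\BB/\CC}\gtrsim\|[\mathds{1}_{A_\alpha}]-[\mathds{1}_{A_\beta}]\|_{\ell^\infty/c_0}=1$ for $\alpha\neq\beta$, hence an uncountable uniformly separated subset of $\BB/\CC$, so the quotient is not separable. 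The embedding $J$ obtained this way is then exactly the input needed to run Whitley's countability argument in the proof of the non-complementation statement (iii).
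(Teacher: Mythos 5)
Your construction of $J$ (single renormalized dyadic blocks $e_n=\theta_n\psi_{-n}$, with the divergent low-frequency series made sense of by subtracting Taylor polynomials) is essentially the paper's map, which instead conjugates by a fractional Laplacian to reach a subcritical space; both devices are fine, and the descent $J(c_0)\subset\CC$ is handled the same way. The problem is the step you yourself flag as unresolved: the lower bound $\|J'[a]\|_{\BB/\CC}\gtrsim\limsup_n|a_n|$. As written, the proposal does not contain a proof of it, so there is a genuine gap, and it sits exactly at the heart of the theorem.

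The gap is closable, and the way out is to drop the requirement $\Lambda_n\circ J=\mathrm{pr}_n$ that forces your test profiles to live at scale $2^n$. The paper instead fixes \emph{one} function $g\in\mc S$ with $\int g\neq 0$ and uses the functionals
\begin{equation*}
\Lambda_j(f)=\big\langle 2^{j(s-d/p)}\dot{\Delta}_j f,\,g\big\rangle_{L^\infty\times L^1},\qquad j\le 0 .
\end{equation*}
These are uniformly bounded on $\BB$ by Bernstein ($\|\dot{\Delta}_jf\|_{L^\infty}\le C2^{jd/p}\|\dot{\Delta}_jf\|_{L^p}$, so $|\Lambda_j(f)|\le C\|f\|_{\BB}\|g\|_{L^1}$). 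On $h\in\mc S'_h\cap\BB$ one has $2^{j(s-d/p)}\le 1$ for $j\le 0$ and $\langle\dot{\Delta}_jh,g\rangle\to 0$ directly from the weak-$\mc S'$ definition of $\mc S'_h$, because the test function is \emph{fixed}; this passes to $\CC$ by the uniform bound and a $2\varepsilon$ argument, giving $\limsup_{j\to-\infty}|\Lambda_j(f)|\le\|g\|_{L^1}\,\mathrm{dist}_{\BB}(f,\CC)$. Finally, $2^{j(s-d/p)}\dot{\Delta}_jJu(x)=u(-j)\psi(2^jx)$ converges pointwise and boundedly to the nonzero constant $u(-j)\psi(0)$, so by dominated convergence $\Lambda_j(Ju)\sim u(-j)\psi(0)\int g$, and the $\limsup$ recovers $\|u\|_{\ell^\infty/c_0}$. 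Note that this dissolves the obstruction you raise via the sign function: that example only shows that the normalized blocks of an $\mc S'_h$ element need not vanish in $L^\infty$ \emph{norm}, whereas the argument only needs them to vanish against a fixed integrable test function; what distinguishes $Ju$ for $u\notin c_0$ is that its normalized blocks converge locally uniformly to nonzero constants. Asymptotic recovery of $u(-j)$ up to the fixed factor $\psi(0)\int g$ is all the quasi-isometry requires; exact recovery of coordinates is not needed. Your concluding paragraph (almost-disjoint families giving an uncountable $1$-separated set) is correct once this bound is in place.
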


\begin{proof}
We start by defining an embedding $J : \ell^\infty \tend \BB$. For any $u \in \ell^\infty$, we formally define
\begin{equation}\label{eq:c2Jappl}
Ju = \sum_{- \infty}^0 u(-j) 2^{-jd\left(1 - \frac{1}{p} \right)} 2^{-js} \psi_j,
\end{equation}
where the functions $\psi_j$ are as in \eqref{eq:smallLPblock} above. Now, unlike the series in \eqref{eq:CounterEx}, it is not at all obvious that $Ju$ defines an element of $\BB$ because the sum \eqref{eq:c2Jappl} need not converge in $\BB$ if $u \notin c_0$. Instead, to clarify the meaning of \eqref{eq:c2Jappl}, we fix $\sigma < d/p$ and set\footnote{We cannot simply take $\sigma = 0$, it would be insufficient when $p = + \infty$.}
\begin{equation*}
g = (- \Delta)^{s - \sigma} Ju = \sum_{- \infty}^0 u(-j) 2^{jd\left(1 - \frac{1}{p} \right)} 2^{-js} (- \Delta)^{s -\sigma} \psi_j.
\end{equation*}
Though the sum above does not converges in the Besov space $\dot{B}^\sigma_{p, \infty}$ any more than it did in $\BB$, the \textsl{supercriticality} of $\dot{B}^\sigma_{p, \infty} \subset \dot{B}^{\sigma - d/p}_{\infty, \infty}$ implies it must converge in $\mc S'$, and so defines a distribution $g \in \mc S'$ with a finite $\dot{B}^\sigma_{p, \infty}$ norm and therefore an element of $\dot{B}^\sigma_{p, \infty}$. Since the fractional Laplacian defines a quasi-isometry (see Proposition \ref{p:c2LapIsom} above)
\begin{equation*}
\xymatrix{
(- \Delta)^{\sigma - s} : \dot{B}^\sigma_{p, \infty} \ar[r] & \BB,
}
\end{equation*}
we can in turn define $J$ by the formula $Ju := (- \Delta)^{\sigma - s} g$.

\medskip

We now are ready to define our map $J' : \ell^\infty / c_0 \tend \BB / \CC$. First of all, we note that if $u \in c_0$ then the series \eqref{eq:c2Jappl} is convergent in the $\BB$ topology, so that $Ju$ is a $\BB$ limit of functions whose Fourier transform is supported away from $\xi = 0$. We deduce that $J(c_0) \subset \CC$. As a consequence, we may define a quotient map $J'$ such that the following diagram commutes:
\begin{equation*}
\xymatrix{
\ell^\infty \ar[rr]^J \ar@{->>}[d] & & \BB \ar@{->>}[d] \\
\ell^\infty / c_0 \ar[rr]^{J'} & & \BB / \CC
}
\end{equation*}
where the vertical maps are the natural projections. To conclude, we must show that $J'$ is a quasi-isometry. In order to do so, we will prove that the functions of $\CC$ inherit a low frequency property from $\mc S'_h \cap \BB$ which $Ju$ cannot posses if $u \notin c_0$. More precisely, we prove that if $g \in L^1$ and $f \in \BB$, then we have:
\begin{equation}\label{eq:c2IneqQuot}
\limss_{j \rightarrow - \infty} \left| \left\langle 2^{j(s - d/p)}\dot{\Delta}_j f, g \right\rangle_{L^\infty \times L^1} \right| \leq \| g \|_{L^1} {\rm dist} (f, \CC) = \| g \|_{L^1} \inf_{h \in \CC} \| f-h \|_{\BB}.
\end{equation}
Taking inequality \eqref{eq:c2IneqQuot} for granted and leaving its proof for later, we are nearly done: by using \eqref{eq:smallLPblock}, we see that the dyadic blocks of $Ju$ are
\begin{equation*}
2^{j(s - d/p)} \dot{\Delta}_j Ju(x) = u(-j) \psi(2^j x)
\end{equation*}
and so, by dominated convergence,
\begin{equation*}
\left\langle 2^{j(s - d/p)} \dot{\Delta}_j Ju, g \right\rangle_{L^\infty \times L^1} \sim u(-j) \, \psi(0) \int g \qquad \text{as } j \rightarrow - \infty.
\end{equation*}
Finally, by choosing $g\in \mc S$ such that $\int g \neq 0$ and noting that, on the one hand $\psi(0) = \int \what{\psi} > 0$, and on the other that
\begin{equation}\label{eq:c1SeqAssert}
\limss_{j \rightarrow - \infty} |u(-j)| \, = \, \inf_{w \in c_0} \|u - w\|_{\ell^\infty} \, = \, \| u \|_{\ell^\infty / c_0},
\end{equation}
we see that $J'$ is indeed a quasi-isometry. Let us prove this last assertion \eqref{eq:c1SeqAssert}. Firstly, it is clear that for all $w \in c_0$, we must have
\begin{equation*}
\limss_{j \rightarrow - \infty} |u(-j)| \, = \, \limss_{j \rightarrow - \infty} |u(-j) - w(-j)| \leq \inf_{w \in c_0} \| u - w \|_{\ell^\infty}.
\end{equation*}
In order to get the reverse inequality, we use the definition of the limit superior as an infimum of suprema. We have
\begin{equation*}
\begin{split}
\limss_{j \rightarrow - \infty} |u(-j)| \, & =\, \inf_{J \rightarrow - \infty} \sup_{j \leq J} \big| u(-j) \big| \\
& =\,  \inf_{J \rightarrow - \infty} \| u - \mathds{1}_{[-J, 0]} u \|_{\ell^\infty} \geq \inf_{w \in c_0} \| u-w \|_{\ell^\infty},
\end{split}
\end{equation*}
because the sequence $\mathds{1}_{[-J, 0]} u$ is finitely supported, and so must lie in $c_0$. Both inequalities prove that equation \eqref{eq:c1SeqAssert} holds.

\medskip

It only remains to prove \eqref{eq:c2IneqQuot}. First of all, we remark that if $h \in \mc S'_h \cap \BB$ and $g \in \mc S$, the condition $s \geq d/p$ implies that for all $j \leq 0$,
\begin{equation*}
\left| \left\langle 2^{j(s - d/p)}\dot{\Delta}_j h, g \right\rangle_{L^\infty \times L^1} \right| \leq \left| \left\langle \dot{\Delta}_j h, g \right\rangle_{\mc S' \times \mc S} \right| \tend_{j \rightarrow - \infty} 0.
\end{equation*}
Next, if $h \in \CC$, we may fix a sequence of functions $h_k \in \mc S'_h \cap \BB$ that converge to $h$ in $\BB$. Then, by using the Bernstein inequalities, we obtain
\begin{equation}\label{eq:c2IneqQuotPrep}
\begin{split}
\left| \left\langle 2^{j(s - d/p)}\dot{\Delta}_j h, g \right\rangle_{L^\infty \times L^1} \right| & \leq 2^{j(s - d/p)} \| \dot{\Delta}_j (h - h_k) \|_{L^\infty} \| g \|_{L^1} \\
& \qquad \qquad \quad \quad \qquad + \left| \left\langle 2^{j(s - d/p)}\dot{\Delta}_j h_k, g \right\rangle_{L^\infty \times L^1} \right| \\
& \leq \| h - h_k \|_{\BB} \| g \|_{L^1} + \left| \left\langle 2^{j(s - d/p)}\dot{\Delta}_j h_k, g \right\rangle_{L^\infty \times L^1} \right|
\end{split}
\end{equation}
and so
\begin{equation*}
\limss_{j \rightarrow - \infty} \left| \left\langle 2^{j(s - d/p)}\dot{\Delta}_j h, g \right\rangle_{L^\infty \times L^1} \right| \leq \| h - h_k \|_{\BB} \| g \|_{L^1} \tend_{k \rightarrow + \infty} 0.
\end{equation*}
which implies that the lefthand side of this last inequality must be zero. Finally, by proceeding exactly as in \eqref{eq:c2IneqQuotPrep}, we see that for all $f \in \BB$ and all $h \in \CC$, we have a similar inequality
\begin{equation*}
\limss_{j \rightarrow - \infty} \left| \left\langle 2^{j(s - d/p)}\dot{\Delta}_j f, g \right\rangle_{L^\infty \times L^1} \right| \leq \| f-h \|_{\BB} \| g \|_{L^1},
\end{equation*}
which gives in turn \eqref{eq:c2IneqQuot}.
\end{proof}

\section{Non-Complementation of the Closure $\CC$}\label{s:c1ComplB}

In this paragraph, we reach the core of our study on $\BB$ and $\CC$. The aim of Theorem \ref{t:c2ComplB} is to reveal the role of $\CC$ in the structure of $\BB$. Precisely, we show that $\CC$ is not complemented in $\BB$: there is no continuous projection $P : \BB \tend \BB$ with range exactly $\CC$. The spirit of this property is to show how different $\CC$ and $\BB$ really are: the proof of Theorem \ref{t:c2ComplB} heavily relies  on the the fact that $\BB / \CC$ is very large (in fact large enough to contain an isomorphic copy of $\ell^\infty / c_0$).

\begin{thm}\label{t:c2ComplB}
Let $p \in [1, + \infty]$ and $s \geq d/p$. Then $\CC$ is not complemented in $\BB$: there is no decomposition $\BB = \CC \oplus G$ with continuous projections.
\end{thm}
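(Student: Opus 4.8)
The plan is to transport the Phillips--Sobczyk theorem through the embedding $J$ constructed in Theorem \ref{t:c2nonSepB}. Suppose, for contradiction, that there is a bounded projection $P : \BB \tend \BB$ with range exactly $\CC$. Since $J : \ell^\infty \tend \BB$ is bounded and $J(c_0) \subset \CC$, I would like to produce from $P$ a bounded projection $\ell^\infty \tend c_0$, which is impossible by Phillips--Sobczyk. The subtlety is that $J$ is not onto and has no obvious left inverse, so I cannot simply conjugate $P$ by $J$. Following Whitley's countability argument, I would instead argue directly about the family of ``independent'' vectors. Concretely, for each subset $A \subset \N$ let $e_A \in \ell^\infty$ be its indicator sequence; as in Whitley's proof there is an uncountable family $(A_i)_{i \in I}$ of infinite subsets of $\N$ that are pairwise almost disjoint (each $A_i \cap A_j$ finite for $i \neq j$). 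Set $f_i = J e_{A_i} \in \BB$ and consider the quotient classes $[f_i] \in \BB / \CC$.

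The key structural facts I would extract, all direct consequences of the quasi-isometry $J' : \ell^\infty / c_0 \tend \BB / \CC$ of Theorem \ref{t:c2nonSepB} together with inequality \eqref{eq:c2IneqQuot}, are: (a) $[f_i] \neq 0$ for every $i$, because $e_{A_i} \notin c_0$; (b) for distinct $i_1, \dots, i_n$ and scalars, the class $[\sum_k c_k f_{i_k}]$ has $\BB/\CC$-norm comparable to $\|\sum_k c_k e_{A_{i_k}}\|_{\ell^\infty / c_0} = \max_k |c_k|$, the last equality holding precisely because the $A_{i_k}$ are almost disjoint (so on a cofinite set the sequences have disjoint supports). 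In particular the $[f_i]$ are ``uniformly linearly independent'' in $\BB / \CC$. Now I run Whitley's argument: if $P$ existed, then $I - P$ followed by the projection $\pi : \BB \tend \BB / \CC$ would be a bounded map $Q : \BB \tend \BB/\CC$ vanishing on $\CC$ and agreeing with $\pi$ on a complement; for each bounded linear functional $\mu$ on $\BB / \CC$ and each $i$, $\mu(Q f_i)$ is a number, and Whitley's countability lemma shows that for any fixed $\mu$ the set of $i$ with $\mu(Q f_i) \neq 0$ is countable. Choosing $\mu$ cleverly (using the functionals $f \mapsto \limsup_{j \to -\infty} \langle 2^{j(s-d/p)} \dot\Delta_j f, g\rangle$-type objects that appear in \eqref{eq:c2IneqQuot}, or rather a genuine functional dominated by them via Hahn--Banach on $\ell^\infty / c_0$) yields, for all but countably many $i$, a contradiction with $[f_i] \neq 0$ and the uniform independence.

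More precisely, here is the cleaner route I would actually write. Complementation of $\CC$ in $\BB$ is equivalent to the existence of a bounded linear lift $L : \BB / \CC \tend \BB$ with $\pi \circ L = \mathrm{id}$. Compose with $J$: the map $S := L \circ J' : \ell^\infty / c_0 \tend \BB$ satisfies $\pi \circ S = J'$, and since $J'$ is a quasi-isometric embedding, $S$ is a bounded linear embedding of $\ell^\infty / c_0$ into $\BB$ whose composition with $\pi$ recovers $J'$. Pulling back by the quotient map $q : \ell^\infty \tend \ell^\infty/c_0$ gives $T := S \circ q : \ell^\infty \tend \BB$, bounded, with $T|_{c_0} $ taking values in... here one must be careful: $\pi(Tu) = J'(q u) = \pi(Ju)$, so $Ju - Tu \in \CC$ for all $u$, i.e. $T$ agrees with $J$ modulo $\CC$. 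Then the argument reduces to showing: there is no bounded $T : \ell^\infty \tend \BB$ with $Tu - Ju \in \CC$ for all $u$ and $Tu \in \CC$ whenever $u \in c_0$ fails... I would instead directly invoke the standard fact (Whitley's form of Phillips--Sobczyk) that $c_0$ is not complemented in $\ell^\infty$ and show that a bounded lift $L$ as above would manufacture exactly such a complementation, because $J'$ being a quasi-isometric isomorphism onto its image, the image $J'(\ell^\infty/c_0)$ is a complemented copy of $\ell^\infty/c_0$ inside $\BB/\CC$, hence (pulling through $\pi$ and $L$) $c_0$ would be complemented in $\ell^\infty$.

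I expect the main obstacle to be the bookkeeping in the reduction rather than any deep new idea: one must make precise that a bounded projection onto $\CC$ yields a bounded lifting $L : \BB/\CC \to \BB$, then check that conjugating $L$ and $\pi$ through the quasi-isometry $J'$ and the exact sequence $0 \to c_0 \to \ell^\infty \to \ell^\infty/c_0 \to 0$ genuinely produces a bounded projection $\ell^\infty \to c_0$, taking care that $J$ is only a quasi-isometric (not isometric) embedding and that $J(c_0) \subset \CC$ may be a strict inclusion, so one works with the induced map on quotients throughout. Once that diagram chase is set up, the contradiction is immediate from the Phillips--Sobczyk theorem \cite{P}, \cite{S}, \cite{W}. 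I would present it in the ``no bounded lift'' formulation, since that is the one that most transparently matches Whitley's proof and makes the role of $\BB/\CC \supset \ell^\infty/c_0$ visible.
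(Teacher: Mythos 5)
There is a genuine gap, and it is exactly at the point you dismiss as ``bookkeeping''. Your cleaner route claims that a bounded lift $L : \BB/\CC \tend \BB$, combined with the embedding $J' : \ell^\infty/c_0 \tend \BB/\CC$ and the exact sequence $0 \to c_0 \to \ell^\infty \to \ell^\infty/c_0 \to 0$, ``genuinely produces a bounded projection $\ell^\infty \tend c_0$''. No purely formal diagram chase from these data can do that. Counterexample: take $X = \ell^\infty/c_0$, $E = \{0\}$, and $J = q$ the quotient map. Then $J$ is bounded, $J^{-1}(E) = c_0$, and $J'$ is an isometric isomorphism of $\ell^\infty/c_0$ onto $X/E = X$; moreover $E$ is trivially complemented, so $L$ exists. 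Yet $c_0$ is not complemented in $\ell^\infty$. So the existence of $L$, $J$, $J'$ with all the properties you list is consistent with $E$ being complemented, and your intended contradiction cannot materialize without using more about $\BB$ than the embedding $J'$. (Your intermediate claim that $J'(\ell^\infty/c_0)$ would be a \emph{complemented} copy inside $\BB/\CC$ is likewise unjustified: isomorphic embeddings need not have complemented range, and $\ell^\infty/c_0$ is not an injective space.) The extra ingredient the paper uses is that $\BB$ (for $p>1$) has a separable predual $\dot{B}^{-s}_{p',1}$, hence a \emph{countable total family of functionals}; this is hypothesis \eqref{eq:countAssump} of Proposition \ref{p:c1AbstractXE}, and it is what makes Whitley's countability argument close. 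The case $p=1$ needs a separate device (the embedding $\dot{B}^s_{1,\infty} \subset \dot{B}^{s-d}_{\infty,\infty}$, Corollary \ref{c:p1case}), which your proposal does not address at all.

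Your first sketch (running Whitley's argument directly on the $f_i = J e_{A_i}$) is closer to the paper's proof, but it misplaces the functionals: you apply $\mu$ to elements of $\BB/\CC$ (via $Q = \pi\circ(I-P)$, which incidentally equals $\pi$ since $P$ has range $\CC$). The countability lemma does show that for each fixed $\mu \in (\BB/\CC)'$ only countably many $i$ satisfy $\mu([f_i]) \neq 0$; but to derive a contradiction with $[f_i]\neq 0$ you would need a countable family of functionals that is total on $\BB/\CC$, and no such family exists --- $\BB/\CC$ contains a copy of $\ell^\infty/c_0$, and the very same almost-disjoint-family argument shows $\ell^\infty/c_0$ admits no countable total set of functionals. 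The correct setup, as in Step 2 of the paper, is to take $T$ to be the projection onto the \emph{complement} $G$ (so $\CC \subset \ker T$ and $T$ maps into $\BB$, not $\BB/\CC$), test $TJu_i$ against a countable dense family $(g_n)$ in the unit ball of the separable predual of $\BB$, and use the splitting $y = \mathds{1}_A y + \mathds{1}_B y$ with $J(\mathds{1}_B y) \in \CC$ to contradict $\|T\| < \infty$. One then concludes $J(\ell^\infty(A_i)) \subset \ker T = \CC$ for some $i$, contradicting $J^{-1}(\CC) = c_0$ from Theorem \ref{t:c2nonSepB}.
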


The spirit of what follows is to adapt the ideas of Whitley \cite{W} (see also \cite{AK}, Section 2.5) in his proof of the Phillips-Sobczyk theorem, which states that $c_0$ is not complemented in $\ell^\infty$. Analysis of Whitley's proof, which is based on a countability argument, reveals two key features which we will need to adapt in the framework of Besov spaces:
\begin{enumerate}[(i)]
\item the existence of an uncountable family of subspaces $\ell^\infty(A_i) \subset \ell^\infty (\N)$, for $i \in I$, that are not in $c_0$ and such that the intersection of any two of these spaces is in $c_0$; in other words, they are mutually independent up to elements of $c_0$;
\item the fact that the separation of points can be tested by a countable set of equalities: for all $u \in \ell^\infty$, we have $u = 0$ if and only if $u(n) = 0$ for all $n \in \N$.
\end{enumerate}
While both these facts seem very specific to $\ell^\infty$, we will find homologous assertions in $\BB$. Firstly, we will see that the embedding $J : \ell^\infty \tend \BB$ of Theorem \ref{t:c2nonSepB} preserves the properties of the spaces $\ell^\infty(A_i)$ used in Whitley's argument. Secondly, the space $\BB$ has a separable\footnote{Recall from Theorem \ref{t:c2BesovDuality} that we note $p'$ the conjugate Lebesgue exponent $\frac{1}{p} + \frac{1}{p'} = 1$.} predual space $\dot{B}^{-s}_{p', 1}$ for all $p > 1$ (see Theorem \ref{t:c2BesovDuality} above), and so the separation of points can be tested by a countable number of equalities (the case $p=1$ will recieve special attention).

\medskip

\textbf{STEP 1}. We begin by proving Theorem \ref{t:c2ComplB} in the case where $p > 1$. The argument in the case $p=1$ is summarized in Corollary \ref{c:p1case} below, whose proof is entirely independent.

We start by constructing an uncountable family of subspaces of $\BB$ such that the intersection of any two of these spaces lies in $\CC$. The existence of such spaces will stem from the following Lemma (see for example Lemma 2.5.3 in \cite{AK}), which we reproduce and prove for the reader's convenience.

\begin{lemma}\label{l:c2countArg}
There exists an uncountable family $(A_i)_{i \in I}$ of infinite subsets of $\N$ such that, for any two $i \neq j$, there is a finite intersection $|A_i \cap A_j| < + \infty$.
\end{lemma}

\begin{proof}[Proof (of the Lemma)]
Since only countability matters in the statement we wish to prove, nothing is lost in replacing $\N$ by $\Q$ and seeking the $A_i$ as subsets of $\Q$. Next, for any irrational $\theta \in \R \setminus \mathbb{Q}$, fix a sequence $(q_k)$ of rational numbers such that $q_k \rightarrow \theta$.

Define $A_\theta = \{ q_k, \; k\geq 0 \}$. Then the sets $(A_\theta)_{\theta \in \R}$ are all infinite and any two of these sets must have finite intersection.
\end{proof}

In particular, the subspaces $\ell^\infty(A_i)$ of $\ell^\infty$ sequences which are compactly supported in $A_i$ have the following properties: for $i \neq j$,
\begin{equation*}
\ell^\infty(A_i) \nsubseteq c_0 \qquad \text{and} \qquad \ell^\infty(A_i) \cap \ell^\infty(A_j) \subset c_0.
\end{equation*}
In what follows, we will transport these spaces into $\BB$ by means of a well chosen map: recall $J : \ell^\infty \tend \BB$ from Theorem \ref{t:c2nonSepB}, which we have seen to satisfy $J^{-1}(\CC) = c_0$ so that  $Ju \in \CC$ if and only if $u \in c_0$. This implies that the image spaces $J \big( \ell^\infty (A_i) \big)$ satisfy
\begin{equation*}
J\big( \ell^\infty(A_i) \big) \nsubseteq \CC \qquad \text{and} \qquad J \big( \ell^\infty(A_i) \cap \ell^\infty(A_j) \big) \subset \CC
\end{equation*}
and we see that the spaces $J\big(\ell^\infty(A_i)\big)$ will be well-suited for our purpose.

\medskip

\textbf{STEP 2}. Consider a nonzero bounded operator $T : \BB \tend \BB$ such that $\CC \subset \ker(T)$. We will prove the existence of a $i \in I$ such that $J\big(\ell^\infty(A_i)\big) \subset \ker(T)$.

Assume, in order to obtain a contradiction, that none of the $J\big(\ell^\infty(A_i)\big)$ lie in the kernel of $T$. Therefore, for every $i \in I$, we may find a $u_i \in \ell^\infty (A_i)$ such that $TJu_i \neq 0$. In addition, we may assume $u$ to be in the unit ball $\| u \|_{\ell^\infty} \leq 1$.

Next, because the predual space $\dot{B}^{-s}_{p', 1}$ of $\BB$ is separable (remember that $p > 1$ for now), we may fix a sequence $(g_n)_{n \geq 1}$ which forms a dense subset of the unit ball of $\dot{B}^{-s}_{p', 1}$. Then
\begin{equation*}
\begin{split}
I & = \{ i \in I, \; TJu_i \neq 0 \} = \bigcup_{n \geq 0} \left\{ i \in I, \quad \left\langle TJu_i, g_n \right\rangle_{\BB \times \dot{B}^{-s}_{p', 1}} \neq 0 \right\} \\
& = \bigcup_{n, k \geq 0} \left\{ i \in I, \quad \left| \left\langle TJu_i, g_n \right\rangle_{\BB \times \dot{B}^{-s}_{p', 1}} \right| \geq \frac{1}{k+1} \right\} := \bigcup_{n, k \geq 0} I_{n, k}.
\end{split}
\end{equation*}
Because $I$ is uncountable and is the countable union of the $I_{n, k}$, there must exist indices $n, k \geq 0$ such that the set $I_{n, k}$ is also uncountable: in particular, there exists an infinite number of $i \in I$ such that the bracket $\left\langle TJu_i, g_n \right\rangle_{\BB \times \dot{B}^{-s}_{p', 1}}$ is not small.

To take advantage of this last fact, we will construct a linear combination of the $u_i$ which will make the bracket become arbitrarily large. Fix a finite subset $F \subset I_{k, n}$ and define the sequence
\begin{equation*}
y = \sum_{i \in F} \alpha_i u_i,
\end{equation*}
where the $\alpha_i$ are chosen so that the bracket $\langle TJy, g_n \rangle_{\BB \times \dot{B}^{-s}_{p', 1}}$ becomes large:
\begin{equation*}
\alpha_i = \frac{\left\langle TJu_i, g_n \right\rangle_{\BB \times \dot{B}^{-s}_{p', 1}}}{|\left\langle TJu_i, g_n \right\rangle_{\BB \times \dot{B}^{-s}_{p', 1}}|} = \pm 1.
\end{equation*}
Therefore
\begin{equation}\label{eq:ComplB1}
\left|\langle TJy, g_n \rangle_{\BB \times \dot{B}^{-s}_{p', 1}} \right| \geq \frac{|F|}{k+1},
\end{equation}
and this lower bound may be made as large as desired by taking $|F|$ as large as needed, the set $I_{k, n}$ being uncountably infinite. On the other hand, because the subsets $A_i$ have finite intersection, we may decompose the union of the $A_i$ as $\cup_{i \in F} A_i = A \sqcup B$ where
\begin{equation}\label{eq:UnionAB}
B = \bigcup_{i \neq j} (A_i \cap A_j)
\end{equation}
the union ranging on all $i, j \in F$ such that $i \neq j$, is a finite set and any $m \in A$ is in exactly one of the $A_i$. By setting $a = \mathds{1}_A y$ and $b = \mathds{1}_B y$, we see that $y = a + b$ with $\| a \|_{\ell^\infty} \leq 1$ and $b$ having finite support. Since $b$ has finite support, $Jb \in \CC$ and $TJy = TJa$, so
\begin{equation}\label{eq:ComplB2}
\left| \left\langle TJu_i, g_n \right\rangle_{\BB \times \dot{B}^{-s}_{p', 1}} \right| \leq \| T \|.
\end{equation}
By comparing \eqref{eq:ComplB1} and \eqref{eq:ComplB2}, we have obtained the contradiction we were seeking, since the set $F$ can be chosen as large as desired, $I_{k, n}$ being uncountably infinite.

\medskip

\textbf{STEP 3}. We may now end the proof of Theorem \ref{t:c2ComplB}.

\begin{proof}[Proof of Theorem \ref{t:c2ComplB}]
Assume on the contrary that $\CC$ has a topological supplementary $\BB = \CC \oplus G$ and let $T : \BB \tend G$ be the associated projection on $G$. Then $T$ is a bounded operator such that $\CC = \ker(T)$ and step 2 gives a $i \in I$ such that $J\big( \ell^{\infty}(A_i) \big) \subset \CC$. But this is a contradiction: Theorem \ref{t:c2nonSepB} asserts that $Ju$ cannot lie in $\CC$ if $u \notin c_0$, which is certainly the case if $u=\mathds{1}_{A_i} \in \ell^\infty(A_i)$.
\end{proof}

\section{The Critical Case: the Intersection $\mc S'_h \cap L^\infty$}\label{s:c2Linfty}

So far in our study, it appears that the space $L^\infty$ of bounded functions plays a critical role in that it lies at the interface between the two very different behaviors the homogeneous Besov spaces have: $L^\infty$ is in the center of the chain of embeddings
\begin{equation*}
\xymatrix{
\dot{B}^{0}_{\infty, 1} \ar[r]^\subset & L^\infty \ar@{->>}[r] & L^\infty / \, \R \ar[r]^\subset & \dot{B}^{0}_{\infty, \infty},
}
\end{equation*}
while on the one hand $\dot{B}^{0}_{\infty, 1} \subset \mc S'_h$, and on the other $\dot{B}^{0}_{\infty, \infty}$ gathers all the properties described in Theorem \ref{t:c2nonSepB} and \ref{t:c2ComplB}. A very natural question is whether the space $\mc S'_h \cap L^\infty$ has an analogous role in the structure of $L^\infty$ as $C^0_\infty$ did for the Besov space $\dot{B}^0_{\infty, \infty}$.

\subsection{The Intersection is Closed}

Our first answer shows a difference between $L^\infty$ and Besov spaces. While $\mc S'_h \cap \dot{B}^0_{\infty, \infty}$ was not closed in $\dot{B}^0_{\infty, \infty}$, the intersection $\mc S'_h \cap L^\infty$ is.

\begin{prop}
The space $L^\infty \cap \mc S'_h$ is closed in $L^\infty$ for the strong topology.
\end{prop}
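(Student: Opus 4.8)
The statement to prove is that $L^\infty \cap \mc S'_h$ is closed in $L^\infty$ for the strong (norm) topology. So I would start with a sequence $(f_n)_{n \geq 0}$ in $L^\infty \cap \mc S'_h$ converging in $L^\infty$ to some $f \in L^\infty$, and I must show $f \in \mc S'_h$, i.e. that $\chi(\lambda D)f \tend 0$ in $\mc S'$ as $\lambda \rightarrow +\infty$. The natural idea is to exploit the uniform control coming from norm convergence in $L^\infty$: fix a test function $\phi \in \mc S$ (or even $\phi \in \mc D$) and estimate $\langle \chi(\lambda D) f, \phi \rangle$ by splitting $f = f_n + (f - f_n)$. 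The second piece contributes $\langle \chi(\lambda D)(f-f_n), \phi \rangle$, whose absolute value is bounded (uniformly in $\lambda$) by $\|\chi(\lambda D)(f-f_n)\|_{L^\infty} \|\phi\|_{L^1} \lesssim \|f - f_n\|_{L^\infty}\|\phi\|_{L^1}$, using that $\chi(\lambda D)$ has $L^\infty \to L^\infty$ operator norm bounded independently of $\lambda$ (its kernel is $\psi_\lambda$ with $\|\psi_\lambda\|_{L^1} = \|\psi\|_{L^1}$). So given $\varepsilon > 0$ we first choose $n$ large enough that this term is $< \varepsilon$ for \emph{all} $\lambda$, and then, $n$ being fixed, we use $f_n \in \mc S'_h$ to find $\Lambda$ such that $|\langle \chi(\lambda D) f_n, \phi\rangle| < \varepsilon$ for $\lambda \geq \Lambda$. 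This is the standard ``uniform limit of convergent things converges'' argument, and it closes the proof.

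**The subtlety.** The one point requiring care is that convergence $f_n \to f$ in $L^\infty$ must imply the right kind of convergence of $\chi(\lambda D)f_n$ to $\chi(\lambda D)f$, uniformly in $\lambda$, when tested against $\phi \in \mc S$. Since $\phi$ need only be Schwartz (not compactly supported), one should note $\langle \chi(\lambda D)(f-f_n), \phi\rangle = \langle f - f_n, \chi(\lambda D)\phi \rangle$ and estimate $|\langle f - f_n, \chi(\lambda D)\phi\rangle| \leq \|f - f_n\|_{L^\infty}\|\chi(\lambda D)\phi\|_{L^1}$; then one checks $\|\chi(\lambda D)\phi\|_{L^1} = \|\psi_\lambda * \phi\|_{L^1} \leq \|\psi_\lambda\|_{L^1}\|\phi\|_{L^1} = \|\psi\|_{L^1}\|\phi\|_{L^1}$, which is the desired uniform-in-$\lambda$ bound. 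Equivalently one can move the multiplier back onto $f - f_n$ and use the Young-type bound on $\chi(\lambda D): L^\infty \to L^\infty$. Either way the key structural fact is the uniform boundedness of $\chi(\lambda D)$ on $L^\infty$, which holds because $\chi$ is a fixed Schwartz symbol and dilation preserves the $L^1$ norm of the kernel.

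**The contrast worth noting.** I would also point out why this argument does \emph{not} carry over to $\dot B^0_{\infty,\infty}$ (where, by Theorem \ref{t:c2closB}, $\mc S'_h \cap \dot B^0_{\infty,\infty}$ fails to be closed): there, norm convergence in $\dot B^0_{\infty,\infty}$ only gives uniform control of $\sup_j \|\dot\Delta_j(f - f_n)\|_{L^\infty}$, which is far too weak to control $\langle \chi(\lambda D)(f-f_n), \phi\rangle = \sum_{j} \langle \dot\Delta_j(f - f_n), \chi(\lambda D)\phi\rangle$ uniformly in $\lambda$, because the number of dyadic blocks involved grows like $\log_2 \lambda$. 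The $L^\infty$ norm, by contrast, directly dominates the full low-frequency cutoff with a $\lambda$-independent constant, which is exactly what makes $\mc S'_h \cap L^\infty$ closed. There is no serious obstacle in the proof itself — it is a soft $\varepsilon/2$ argument — the only thing to get right is the uniform operator bound just described.
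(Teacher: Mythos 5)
Your argument is correct and follows essentially the same route as the paper's own proof: the same decomposition $f = f_n + (f - f_n)$, the same uniform-in-$\lambda$ bound $\|\chi(\lambda D)(f-f_n)\|_{L^\infty} \leq \|\psi\|_{L^1}\|f-f_n\|_{L^\infty}$, and the same $\limsup_{\lambda\to+\infty}$ followed by letting $n\to+\infty$. The additional observations on testing against Schwartz functions and on why the argument breaks down in $\dot B^0_{\infty,\infty}$ are sound but not part of the paper's proof.
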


\begin{proof}
Let $(f_n)_{n \geq 0}$ be a converging sequence of functions in $L^\infty \cap \mc S'_h$ whose limit is $f \in L^\infty$. We have, for all $\phi \in \mc S$,
\begin{align*}
\left| \left\langle \chi(\lambda D)f, \phi \right\rangle_{L^\infty \times L^1} \right| & \leq \left| \left\langle \chi (\lambda D)(f - f_n) , \phi \right\rangle_{L^\infty \times L^1} \right| +  \left| \left\langle \chi(\lambda D) f_n, \phi \right\rangle_{L^\infty \times L^1} \right| \\
& \leq \left\| \chi (\lambda D) (f - f_n) \right\|_{L^\infty} \| \phi \|_{L^1} + \left| \left\langle \chi(\lambda D) f_n, \phi \right\rangle_{\mc S' \times \mc S} \right|.
\end{align*}
The fact that the $\chi(\lambda D)f_n$ converge to $0$ in $\mc S'$ as $\lambda \rightarrow +\infty$ shows that we have,
\begin{equation*}
\forall n \geq 0, \qquad \limss_{\lambda \rightarrow + \infty} \left| \left\langle \chi(\lambda D)f, \phi \right\rangle_{\mc S' \times \mc S} \right| \leq   C \left\| (f - f_n) \right\|_{L^\infty} \| \phi \|_{L^1}.
\end{equation*}
This term has limit $0$ as $n \rightarrow +\infty$ so that $f$ indeed lies in $L^\infty \cap \mc S'$.
\end{proof}

\subsection{Non-Separability of the Quotient}

\begin{thm}\label{t:c2nonSepL}
The quotient space $L^\infty / (\mc S'_h \cap L^\infty)$ is not separable. In fact, there exists an embedding $J : \ell^\infty \tend L^\infty$ which defines a quasi-isometry between the (non-separable) quotient spaces $J' : \ell^\infty / c_0 \tend L^\infty / (\mc S'_h \cap L^\infty)$.
\end{thm}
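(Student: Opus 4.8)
The plan is to mirror the proof of Theorem \ref{t:c2nonSepB}, replacing the Besov norm by the $L^\infty$ norm and the dyadic-block characterization of $\CC$ by the convolution characterization of $\mc S'_h \cap L^\infty$ established in Proposition \ref{p:c2SpHequiv}. First I would define the embedding $J : \ell^\infty \tend L^\infty$ by a lacunary series analogous to \eqref{eq:c2Jappl}: fix $\psi \in \mc S$ with $\what\psi \geq 0$ supported in the annulus $\{1/2 \leq |\xi| \leq 2\}$ (so that $\psi(0) = \int \what\psi > 0$), pick a rapidly growing sequence of integers $j_n \to -\infty$, and set $Ju = \sum_{n \geq 0} u(n)\, \psi_{j_n}$ where $\psi_j(x) = \psi(2^j x)$ (note the \emph{non-normalized} dilation, which keeps $\|\psi_j\|_{L^\infty} = \|\psi\|_{L^\infty}$ constant). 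Because the Fourier supports are disjoint annuli, $\chi(\lambda D)Ju = \sum_n u(n)\chi(\lambda D)\psi_{j_n}$, and one checks $\|Ju\|_{L^\infty} \lesssim \|u\|_{\ell^\infty}$ by exploiting the fast decay of $\psi$ and the lacunarity of the $j_n$ — this convergence in $L^\infty$ is automatic when $u \in c_0$ and needs a (routine but real) estimate when $u \notin c_0$; alternatively one defines $Ju$ as the $L^\infty$-limit of partial sums after checking they are Cauchy, which they are since $\|\psi_{j_n}\|_{L^\infty}$ is constant but the supports are spread out.

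\medskip

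Next I would show $J(c_0) \subset \mc S'_h \cap L^\infty$: if $u \in c_0$ the series converges in $L^\infty$ to a limit of functions whose Fourier transforms are supported away from the origin, each of which lies in $\mc S'_h$ by Example \ref{ex:c1TFloc}, and $\mc S'_h \cap L^\infty$ is closed by the preceding Proposition. This produces the quotient map $J' : \ell^\infty/c_0 \tend L^\infty/(\mc S'_h \cap L^\infty)$ making the diagram commute, and since $\|J\| \lesssim 1$ we get the upper quasi-isometry bound $\|J'u\|_{L^\infty/(\mc S'_h \cap L^\infty)} \lesssim \|u\|_{\ell^\infty/c_0} = \limss_{n} |u(n)|$, the equality being \eqref{eq:c1SeqAssert}. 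The crux is the lower bound. For this I would prove the analogue of \eqref{eq:c2IneqQuot}: if $g \in L^1$ and $f \in \mc S'_h \cap L^\infty$, then by Proposition \ref{p:c2SpHequiv} the convolutions $\psi_\lambda * f(0) = \langle f, \psi_\lambda\rangle$ tend to zero, and more is true — a Taylor-expansion argument as in the proof of Proposition \ref{p:c2SpHequiv} shows $\psi_\lambda * f$ converges to zero \emph{locally uniformly}; hence evaluating the convolution against $\psi$ at any point gives $\psi_{2^{-j}} * f(0) \to 0$, and for $f \in \mc S'_h \cap L^\infty$ one extends this by density/closedness to control $\limss_{j \to -\infty}|\langle \psi_{2^{-j}} * f, g\rangle|$ by $\|g\|_{L^1}\,\mathrm{dist}(f, \mc S'_h \cap L^\infty)$ exactly as in \eqref{eq:c2IneqQuotPrep}. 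Applied to $f = Ju$ one computes, by the disjoint-support property and dominated convergence, $\langle \psi_{2^{-j_n}} * Ju, g\rangle \sim u(n)\,\psi(0)^2 \int g$ as $n \to \infty$ (up to a harmless constant from the overlap of the fixed $\psi$ with itself), which forces $\mathrm{dist}(Ju, \mc S'_h \cap L^\infty) \gtrsim \limss_n |u(n)| = \|u\|_{\ell^\infty/c_0}$ and completes the quasi-isometry.

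\medskip

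The main obstacle I expect is establishing that $J$ maps into $L^\infty$ with a uniform bound when $u \notin c_0$ — i.e.\ that the lacunary series $\sum_n u(n)\psi_{j_n}$ converges in $L^\infty$ even though its terms do not tend to zero in $L^\infty$. This works precisely because the bumps $\psi_{j_n}$, while all of the same height, live at wildly different scales, so at any given point $x$ only boundedly many of them are non-negligible; quantifying this requires choosing the gaps $j_{n+1} - j_n$ large enough (depending on the decay rate of $\psi$) and summing a convergent geometric-type tail. A secondary subtlety is that, unlike in the Besov setting where the dyadic blocks $\dot\Delta_j$ cleanly isolate each term, here the "test" operation $f \mapsto \psi_{2^{-j_n}} * f$ does not annihilate the neighboring terms of the series exactly; one must check their contribution is $o(1)$, again using lacunarity and the Schwartz decay of $\psi$. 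Both are of the same flavour as estimates already implicit in the paper and should not pose a genuine difficulty, merely bookkeeping.
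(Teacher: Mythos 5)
There is a genuine gap, and it sits exactly where you flagged ``the main obstacle'': your map $J$ does not take values in $L^\infty$. With the non-normalized dilation $\psi_j(x)=\psi(2^j x)$ and $j_n\tend-\infty$, the bumps do not spread apart --- they get \emph{wider}. For any fixed $x$ one has $2^{j_n}x\tend 0$, hence $\psi(2^{j_n}x)\tend\psi(0)=\int\what{\psi}>0$; so for $u\equiv 1$ the partial sums $\sum_{n\leq N}u(n)\psi(2^{j_n}x)$ grow like $N\psi(0)$ at \emph{every} point $x$, no matter how lacunary the $j_n$ are. Your heuristic that ``at any given point only boundedly many bumps are non-negligible'' is the right picture for bumps shrinking to scale $2^{-j_n}\tend 0$, but here the scales tend to infinity and every fixed point eventually lies in the core of all subsequent bumps. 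Equivalently: a series whose terms satisfy $\|u(n)\psi_{j_n}\|_{L^\infty}=|u(n)|\,\|\psi\|_{L^\infty}\not\to 0$ cannot be Cauchy in $L^\infty$, and unlike in Theorem \ref{t:c2nonSepB} there is no fractional-Laplacian detour that gives the divergent series a meaning \emph{inside} $L^\infty$: what you have built is essentially the element of $\dot{B}^0_{\infty,\infty}\setminus L^\infty$ underlying \eqref{eq:c2Jappl}, not a bounded function. This cannot be repaired by taking $\psi(0)=0$ either, since your lower bound needs $\psi(0)>0$ to detect $u(n)$.

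The paper circumvents this by making the supports disjoint in \emph{physical} space rather than in Fourier space: it takes $Ju=\sum_m u(m)\mathds{1}_{\mc C_m}$ on the annuli $\mc C_m=\{r_m\leq|x|\leq r_{m+1}\}$ with $r_m=2^{m^2}$, so that $\|Ju\|_{L^\infty}\leq\|u\|_{\ell^\infty}$ is trivial, and then detects $u(M)$ by testing against $\psi_\lambda$ at the matched scale $\lambda=r_{M+1}$, the super-lacunary growth of the $r_m$ guaranteeing that all other annuli contribute $O(\epsilon)+O(4^{-Md})$. The second half of your argument --- the inequality $\limss_{\lambda}|\langle f,\psi_\lambda\rangle_{L^\infty\times L^1}|\leq \|\psi\|_{L^1}\,\mathrm{dist}(f,\mc S'_h\cap L^\infty)$ obtained from Proposition \ref{p:c2SpHequiv} and the triangle-inequality scheme of \eqref{eq:c2IneqQuotPrep} --- is exactly the paper's \eqref{eq:IneqQuotLinfty} and is sound; only the construction of $J$ needs to be replaced.
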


\begin{proof}
To construct the map $J : \ell^\infty \tend L^\infty$, the idea is to take advantage of Proposition \ref{p:c2SpHequiv} which states that the space $\mc S'_h$ is characterized by convergence of a family of average values: if $\chi$ is the Littlewood-Paley decomposition function, as in Subsection \ref{ss:c2LPdecomp} and $\psi \in \mc S$ such that $\what{\psi} = \chi$, then a bounded function $f \in L^\infty$ is in $\mc S'_h$ if and only if there is convergence of the average values
\begin{equation*}
\langle f, \psi_\lambda \rangle_{L^\infty \times L^1} = \int f(x) \, \psi \left( \frac{x}{\lambda} \right) \frac{\dx}{\lambda^d} \tend_{\lambda \rightarrow + \infty} 0.
\end{equation*}
For any $u \in \ell^\infty$, we will construct a function $Hu \in L^\infty$ whose average values $\langle f, \psi_\lambda \rangle_{L^\infty \times L^1}$ will share accumulation points with the sequence $u$ when $\lambda \rightarrow + \infty$.

\medskip

Consider an increasing sequence $(r_m)_m$ of radii (which we will fix later on) with $r_0 = 0$ and define a family of annuli by $\mc C_m = \{ r_m \leq |x| \leq r_{m+1} \}$. For every sequence $u \in \ell^\infty$, we set
\begin{equation*}
Ju = \sum_{m=0}^\infty u(m) \mathds{1}_{\mc C_m},
\end{equation*}
where the sum is to be understood in the sense of pointwise convergence. First of all, the map $J : \ell^\infty \tend L^\infty$ is bounded. Next, if $u \in c_0$, then $Ju$ has limit zero at $|x| \rightarrow + \infty$, and so $Ju \in C_0 \subset \mc S'_h \cap L^\infty$ (Example \ref{ex:c2C0} shows that the space $C_0$ of continuous function that tend to zero at infinity lies in $\mc S'_h$). The inclusion $J(c_0) \subset \ker(J)$ allows us to define a quotient map $J'$ such that the following diagram commutes:
\begin{equation*}
\xymatrix{
\ell^\infty \ar[rr]^J \ar@{->>}[d] & & L^\infty \ar@{->>}[d] \\
\ell^\infty / c_0 \ar[rr]^{J'} & & L^\infty / (\mc S'_h \cap L^\infty)
}
\end{equation*}
To fall back on the arguments of Theorem \ref{t:c2nonSepB}, we must now show the converse: that if $Ju \in \mc S'_h \cap L^\infty$ then we must have $u \in c_0$. For this, we prove an inequality that is quite analogous to \eqref{eq:c2IneqQuot}. We show that
\begin{equation}\label{eq:IneqQuotLinfty}
\forall f \in L^\infty, \qquad \limss_{\lambda \rightarrow + \infty } \left| \langle f, \psi_\lambda \rangle_{L^\infty \times L^1} \right| \leq {\rm dist} (f, \mc S'_h \cap L^\infty) := \inf_{h \in \mc S'_h \cap L^\infty} \| f - h \|_{L^\infty}.
\end{equation}
The argument is \textsl{mutatis mutandi} the same as for \eqref{eq:c2IneqQuot}. On the one hand, in light of Proposition \ref{p:c2SpHequiv}, it is clear that for any $h \in \mc S'_h \cap L^\infty$ we must have $\langle h, \psi_\lambda \rangle_{L^\infty \times L^1} \tend 0$. On the other hand, for $f \in L^\infty$ and $h \in \mc S'_h \cap L^\infty$, we have
\begin{equation*}
\left| \langle f, \psi_\lambda \rangle_{L^\infty \times L^1} \right| \leq \| f - h \|_{L^\infty} \| \psi \|_{L^1} + \left| \langle h, \psi_\lambda \rangle_{L^\infty \times L^1} \right|.
\end{equation*}
By taking the limit $\lambda \rightarrow + \infty$, we deduce \eqref{eq:IneqQuotLinfty}.

\medskip

With \eqref{eq:IneqQuotLinfty} at our disposal, we may show that $Ju \notin \mc S'_h \cap L^\infty$ if $u$ does not belong to $c_0$. We start by fixing a $u \in \ell^\infty$. Consider a $\lambda > 0$ whose value will be decided later, we have
\begin{equation}\label{eq:c2IntHuSum}
\int \psi_\lambda Ju = \sum_{m = 0}^\infty u(m) \int_{\mc C_m} \psi_\lambda = \sum_{m = 0}^\infty u(m) \int_{\lambda^{-1} \mc C_m} \psi.
\end{equation}
Let $\epsilon > 0$ and fix a $R > 0$ such that
\begin{equation*}
\| \mathds{1}_{|x| \geq R} \psi (x) \|_{L^1} = \| \mathds{1}_{| x| \geq R \lambda} \psi_\lambda (x) \|_{L^1} \leq \epsilon.
\end{equation*}
On the one hand, the terms of \eqref{eq:c2IntHuSum} at high indices $m$ will be small. More precisely,
\begin{equation}\label{eq:c2nonSepL1}
\left|\sum_{r_m \geq \lambda R} u(m) \int_{\lambda^{-1} \mc C_m} \psi  \right| \leq \| u \|_{\ell^\infty} \int_{|x| \geq R} |\psi (x)| {\rm d}x \leq \epsilon \| u \|_{\ell^\infty}.
\end{equation}
On the other hand, terms at low indices will also be small if $\lambda$ is large, because the annuli $\lambda^{-1} \mc C_m$ have a small measure: fix a $M \geq 0$, then
\begin{equation}\label{eq:c2nonSepL2}
\left| \sum_{m=0}^{M-1} u(m) \int_{\lambda^{-1} \mc C_m} \psi \right| \leq \| u \|_{\ell^\infty} \| \psi \|_{L^\infty} \frac{1}{\lambda^d} \sum_{m=0}^{M-1} |\mc C_m| = \| u \|_{\ell^\infty} \| \psi \|_{L^\infty} \left( \frac{r_M}{\lambda} \right)^d.
\end{equation}

\medskip

We set the values of the $r_m$, and $\lambda$ so that both sums \eqref{eq:c2nonSepL1} and \eqref{eq:c2nonSepL2} are negligible and only the contribution of one term matters. Fix a $M \geq 0$ and set
\begin{equation*}
r_m = 2^{m^2} \qquad \text{and} \qquad \lambda = r_{M+1}
\end{equation*}
so that $r_M / \lambda = O(4^{-M})$. The sum in \eqref{eq:c2nonSepL1} ranges on all indices $m$ such that $r_m \geq r_{M+1} R$, that is all $m \geq 0$ with
\begin{equation*}
2^{m^2} \geq 2^{2M+1} 2^{M^2} R.
\end{equation*}
Therefore, any $m \geq M+1$ is included in that sum if $M$ is taken large enough that $2^{2M+1} R > 1$. For such $M$, we may bound the difference between the full sum \eqref{eq:c2IntHuSum} and the $M$-th term:
\begin{equation*}
\left| \int \psi_\lambda Ju - u(M) \int_{\mc C_M} \psi_\lambda \right| \leq \| u \|_{\ell^\infty} \left( \epsilon + \frac{\| \psi \|_{L^\infty}}{4^M} \right).
\end{equation*}
Finally, the principal term is equal to $u(M)\int \psi$ up to a small remainder: by using exactly the same bound as in \eqref{eq:c2nonSepL1} and \eqref{eq:c2nonSepL2}, we see that
\begin{equation*}
\left| \int_{{}^c \mc C_M} \psi_\lambda \right| \leq \epsilon + \frac{\| \psi \|_{L^\infty}}{4^M}.
\end{equation*}
Therefore, by taking small $\epsilon$, large $M$ and $\lambda = r_{M+1}$, we see that we can make the bracket $\langle Ju, \psi_\lambda \rangle_{L^\infty \times L^1}$ arbitrarily close to any $u(M)$, so all the accumulation points of $u$ are also accumulation points of the bracket as $\lambda \rightarrow + \infty$. We deduce:
\begin{equation*}
\| u \|_{\ell^\infty / c_0} = \limss_{M \rightarrow + \infty} |u(M)| \leq \limss_{\lambda \rightarrow + \infty} \left| \langle Ju, \psi_\lambda \rangle_{L^\infty \times L^1} \right|,
\end{equation*}
which ends the proof.
\end{proof}

\subsection{Non-Complementation of the Intersection}

In this final Section, we exploit the embedding $J : \ell^\infty / c_0 \tend L^\infty / (\mc S'_h \cap L^\infty)$ we have constructed in Theorem \ref{t:c2nonSepL} above to show that $\mc S'_h \cap L^\infty$ is not complemented in $L^\infty$.

\begin{thm}\label{t:c2ComplL}
The space $\mc S'_h \cap L^\infty$ is not complemented in $L^\infty$. There exists no decomposition $L^\infty = (\mc S'_h \cap L^\infty) \oplus G$ with continuous projections.
\end{thm}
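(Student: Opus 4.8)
The plan is to mimic, essentially verbatim, the strategy used for Theorem \ref{t:c2ComplB}, replacing $\BB$ by $L^\infty$ and $\CC$ by $\mc S'_h \cap L^\infty$. All the structural ingredients are already in place: Lemma \ref{l:c2countArg} gives an uncountable family $(A_i)_{i \in I}$ of infinite subsets of $\N$ with pairwise finite intersections, and Theorem \ref{t:c2nonSepL} provides an embedding $J : \ell^\infty \tend L^\infty$ with $J^{-1}(\mc S'_h \cap L^\infty) = c_0$, so that the image spaces $J\big(\ell^\infty(A_i)\big)$ satisfy $J\big(\ell^\infty(A_i)\big) \nsubseteq \mc S'_h \cap L^\infty$ while $J\big(\ell^\infty(A_i) \cap \ell^\infty(A_j)\big) \subset \mc S'_h \cap L^\infty$ for $i \neq j$. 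As before, I would argue by contradiction: suppose $L^\infty = (\mc S'_h \cap L^\infty) \oplus G$ with continuous projections, and let $T : L^\infty \tend G$ be the projection onto $G$, a nonzero bounded operator with $\ker(T) = \mc S'_h \cap L^\infty$.

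The heart of the argument is the analogue of Step 2: I would show there exists $i \in I$ with $J\big(\ell^\infty(A_i)\big) \subset \ker(T)$, which immediately contradicts Theorem \ref{t:c2nonSepL} since $\mathds{1}_{A_i} \in \ell^\infty(A_i) \setminus c_0$. Assuming no such $i$, for each $i$ pick $u_i \in \ell^\infty(A_i)$ with $\|u_i\|_{\ell^\infty} \leq 1$ and $TJu_i \neq 0$. The one point where $L^\infty$ differs from $\BB$ is the separability of the predual: $\BB$ (with $p>1$) had the separable predual $\dot{B}^{-s}_{p',1}$, whereas $(L^\infty)' = L^1$ is \emph{not} separable. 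However — and this is the key observation — I only need a countable family of functionals that separates the points $TJu_i \in G$. Since $G$ is a subspace of $L^\infty$, and the relevant vectors $TJu_i$ all lie in the (separable!) closed subspace spanned by $\{TJu_i : i \in I\}$... that does not quite work since $I$ is uncountable. The correct fix, as in Whitley's original proof of Phillips--Sobczyk, is to use countably many coordinate functionals directly: for a function $h \in L^\infty$, $h = 0$ iff $\int h \varphi = 0$ for every $\varphi$ in a fixed countable dense subset of $L^1$ — but $L^1$ is separable! Indeed $L^1(\R^d)$ is separable. So I fix a countable dense set $(g_n)_{n \geq 1}$ in the unit ball of $L^1$, and write $I = \{i : TJu_i \neq 0\} = \bigcup_{n,k} I_{n,k}$ with $I_{n,k} = \{i : |\langle TJu_i, g_n\rangle_{L^\infty \times L^1}| \geq \tfrac{1}{k+1}\}$; uncountability of $I$ forces some $I_{n,k}$ to be uncountable.

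From here the argument is identical to Step 2 of Theorem \ref{t:c2ComplB}: pick a large finite $F \subset I_{n,k}$, form $y = \sum_{i \in F} \alpha_i u_i$ with $\alpha_i = \pm 1$ chosen so that $\langle TJy, g_n \rangle_{L^\infty \times L^1}$ has modulus at least $|F|/(k+1)$; on the other hand decompose $\cup_{i \in F} A_i = A \sqcup B$ with $B = \bigcup_{i \neq j} (A_i \cap A_j)$ finite and each point of $A$ lying in exactly one $A_i$, set $a = \mathds{1}_A y$, $b = \mathds{1}_B y$, note $\|a\|_{\ell^\infty} \leq 1$ and $b$ finitely supported, hence $Jb \in \mc S'_h \cap L^\infty = \ker(T)$ (the linearity of $J$ and $J(c_0) \subset \mc S'_h \cap L^\infty$ being used here), so $TJy = TJa$ and therefore $|\langle TJy, g_n\rangle| \leq \|T\|\,\|g_n\|_{L^1} \leq \|T\|$. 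Since $|F|$ can be taken arbitrarily large, this is the desired contradiction. I anticipate the only genuine subtlety — and the place a careful writer should pause — is the substitution of $L^1$-separability for the predual-separability used in the Besov case; everything else is a transcription. (One should also double-check that $J$ being merely bounded, not a quasi-isometry on all of $\ell^\infty$, is enough: it is, since only $J(c_0) \subset \ker T$ and boundedness of $J$ are invoked.)
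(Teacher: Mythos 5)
Your proposal is correct and follows essentially the same route as the paper: the paper packages Whitley's argument into an abstract principle (Proposition \ref{p:c1AbstractXE}, requiring a countable separating family of functionals on $X$) and applies it to $X = L^\infty$, $E = \mc S'_h \cap L^\infty$ with the map $J$ from Theorem \ref{t:c2nonSepL}, and your key observation --- that the separability of $L^1$ supplies the countable separating family replacing the separable predual $\dot{B}^{-s}_{p',1}$ of the Besov case --- is exactly the point the paper relies on. The only difference is presentational: you transcribe the argument directly rather than factoring it through the abstract proposition.
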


The proof of Theorem \ref{t:c2ComplL} will be very similar to that of Theorem \ref{t:c2ComplB}. In fact, we may give an abstract general principle which captures the essence of Whitley's argument and ``lifts'' it to another Banach space $X$ provided it contains a ``suitable'' copy of $\ell^\infty$. Theorem \ref{t:c2ComplL} is a direct consequence of Theorem \ref{t:c2nonSepL} and the following Proposition.

\begin{prop}\label{p:c1AbstractXE}
Let $X$ be a Banach space that has the following property: there exists a countable family $(g_n)_{n \geq 1}$ of bounded linear functionals $g_n \in X'$ such that
\begin{equation}\label{eq:countAssump}
\forall f \in X, \qquad  \Big( \forall n \geq 1, \; \langle g_n, f \rangle_{X' \times X} = 0 \Big) \; \Rightarrow \; f = 0.
\end{equation}
In particular, if $X$ has a separable predual, then this property is fulfilled. Now, consider a closed subspace $E \subset X$ and assume the existence of a bounded map $J : \ell^\infty \tend X$ that defines an embedding of the quotient $J': \ell^\infty / c_0 \tend X/E$ such that the diagram
\begin{equation*}
\xymatrix{
\ell^\infty \ar[rr]^J \ar@{->>}[d] & & X \ar@{->>}[d] \\
\ell^\infty / c_0 \ar[rr]^{J'} & & X/E
}
\end{equation*}
commutes (the vertical arrows are again the natural projections). Then $E$ is not complemented in $X$.
\end{prop}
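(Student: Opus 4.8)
The plan is to argue by contradiction, reproducing the three-step structure of the proof of Theorem~\ref{t:c2ComplB} verbatim, with $X$ in place of $\BB$, the closed subspace $E$ in place of $\CC$, and the countable separating family $(g_n)_{n \geq 1}$ playing the role formerly played by a dense sequence in the separable predual $\dot{B}^{-s}_{p', 1}$. First I would dispose of the parenthetical remark: if $X = Y'$ for a separable Banach space $Y$, one picks a countable dense subset $(y_n)_{n \geq 1}$ of $Y$ and lets $g_n \in X'$ be the image of $y_n$ under the canonical embedding $Y \hookrightarrow Y'' = X'$; density of $(y_n)$ forces \eqref{eq:countAssump}. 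Then, assuming for contradiction that $X = E \oplus G$ with continuous projections, let $T : X \tend X$ be the bounded projection onto $G$ along $E$, so that $\ker(T) = E$, and note that $E \neq X$ because $J'$ embeds $\ell^\infty / c_0 \neq \{0\}$ into $X/E$.

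The combinatorial input is imported unchanged: Lemma~\ref{l:c2countArg} provides an uncountable family $(A_i)_{i \in I}$ of infinite subsets of $\N$ with $|A_i \cap A_j| < + \infty$ for $i \neq j$, and the subspaces $\ell^\infty(A_i)$ of sequences supported in $A_i$ satisfy $\ell^\infty(A_i) \nsubseteq c_0$ and $\ell^\infty(A_i) \cap \ell^\infty(A_j) = \ell^\infty(A_i \cap A_j) \subset c_0$. Since $J(c_0) \subset E$ and $J'$ is injective, $Ju \in E$ holds if and only if $u \in c_0$, so $J\big(\ell^\infty(A_i)\big) \nsubseteq E$ while $J\big( \ell^\infty(A_i) \cap \ell^\infty(A_j) \big) \subset E$. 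The core of the argument is the claim that there is some $i \in I$ with $J\big(\ell^\infty(A_i)\big) \subset \ker(T)$; granting it, the proof concludes as in Step~3 of Theorem~\ref{t:c2ComplB}, since $\mathds{1}_{A_i} \in \ell^\infty(A_i) \setminus c_0$ forces $J(\mathds{1}_{A_i}) \notin E = \ker(T)$, a contradiction.

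To establish the claim I would argue exactly as in Step~2 of the proof of Theorem~\ref{t:c2ComplB}. Supposing it fails, choose for each $i \in I$ a sequence $u_i \in \ell^\infty(A_i)$ with $\|u_i\|_{\ell^\infty} \leq 1$ and $TJu_i \neq 0$; by \eqref{eq:countAssump} we may write $I = \bigcup_{n, k \geq 1} I_{n, k}$ with $I_{n, k} = \{ i \in I : |\langle g_n, TJu_i \rangle_{X' \times X}| \geq \tfrac{1}{k} \}$, so that some $I_{n, k}$ is uncountable. For a finite $F \subset I_{n, k}$, set $y = \sum_{i \in F} \alpha_i u_i$ with scalars $\alpha_i$ of modulus one chosen so that $\alpha_i \langle g_n, TJu_i \rangle_{X' \times X} = |\langle g_n, TJu_i \rangle_{X' \times X}|$; linearity of $J$, $T$ and $g_n$ gives $|\langle g_n, TJy \rangle_{X' \times X}| \geq |F|/k$. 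On the other hand, split $\bigcup_{i \in F} A_i = A \sqcup B$ with $B = \bigcup_{i \neq j} (A_i \cap A_j)$ a finite set and each element of $A$ lying in exactly one $A_i$, and write $y = \mathds{1}_A y + \mathds{1}_B y =: a + b$: then $b$ is finitely supported, so $Jb \in J(c_0) \subset E = \ker(T)$ and $TJy = TJa$, whereas $\|a\|_{\ell^\infty} \leq 1$ because each coordinate of $a$ equals a coordinate of a single $\alpha_i u_i$. Hence $|\langle g_n, TJy \rangle_{X' \times X}| \leq \|g_n\|_{X'} \|T\| \, \|J\|$, a bound independent of $F$, which contradicts $|F|/k \leq \|g_n\|_{X'}\|T\|\|J\|$ for $|F|$ large.

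I do not expect any genuine obstacle here: every analytic step already occurs in the proof of Theorem~\ref{t:c2ComplB}, and the only new point is the elementary observation, made at the outset, that Whitley's pigeonhole argument uses the separable predual solely through the existence of a countable point-separating subset of $X'$, which is precisely \eqref{eq:countAssump}. The one place where care is needed is the bookkeeping in the $A \sqcup B$ decomposition — verifying $\|a\|_{\ell^\infty} \leq 1$ and $Jb \in E$ — since this is exactly where the pairwise-finite-intersection property of the family $(A_i)$ is consumed.
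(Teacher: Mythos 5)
Your proof is correct and follows essentially the same route as the paper's: the same combinatorial family from Lemma~\ref{l:c2countArg}, the same pigeonhole decomposition of $I$ via the separating functionals $(g_n)$, and the same $A \sqcup B$ splitting to contradict the unbounded lower bound $|F|/k$. The only differences are that you spell out two points the paper leaves implicit (the verification that a separable predual yields \eqref{eq:countAssump}, and the final deduction that the claim $J\big(\ell^\infty(A_i)\big) \subset \ker(T)$ contradicts injectivity of $J'$), which is a welcome completion rather than a departure.
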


\begin{proof}[Proof of Proposition \ref{p:c1AbstractXE}]
Consider $T : X \tend X$ a bounded operator such that $E \subset \ker(T)$ and let $(A_i)_{i \in I}$ be the family of subspaces given by Lemma \ref{l:c2countArg}. We show that there must be a $i \in I$ with $J \big( \ell^\infty (A_i) \big) \subset \ker(T)$. Assume on the contrary that for all $i \in I$ there is a $u_i$ such that $TJu_i \neq 0$. Then, if $(g_n)_n$ is as in the statement of Proposition \ref{p:c1AbstractXE},
\begin{equation*}
I = \{ i \in I, \; TJu_i \neq 0 \} = \bigcup_{k, n \geq 0} \left\{ i \in I, \quad \left| \langle g_n, TJu_i \rangle_{X' \times X} \right| \geq \frac{1}{k+1} \right\} := \bigcup_{k, n \geq 0} I_{k, n}.
\end{equation*}
Now, since $I$ is uncountable, there must be indices $k, n \geq 0$ such that $I_{k, n}$ is also uncountable. Next, for any finite $F \subset I_{k, n}$, let
\begin{equation*}
y = \sum_{i \in F} \alpha_i u_i, \qquad \text{where} \qquad \alpha_i = \frac{\langle g_n, TJu_i \rangle_{X' \times X}}{\left| \langle Tg_n, Ju_i \rangle_{X' \times X} \right|}.
\end{equation*}
In particular, 
\begin{equation}\label{eq:ComplL1}
\left| \langle g_n, TJy \rangle_{X' \times X} \right| \geq \frac{|F|}{k+1}.
\end{equation}
Next, thanks to the properties of the $A_i$ given by Lemma \ref{l:c2countArg}, we can decompose the union of the $A_i$ in $\cup_{i \in F} A \sqcup B$, where $A$ and $B$ are given exactly as in \eqref{eq:UnionAB}. By setting $y = a+b = \mathds{1}_A y + \mathds{1}_B y$, we have $TJy = TJa$ and
\begin{equation*}
\left| \langle TJy, g_n \rangle_{X' \times X} \right| \leq \| T \|,
\end{equation*}
which contradicts \eqref{eq:ComplL1}, since the set $F$ can be chosen as large as desired.
\end{proof}

Another consequence of the abstract principle of Proposition \ref{p:c1AbstractXE} is that we may prove Theorem \ref{t:c2ComplB} in the case of the Lebesgue exponent $p=1$. 

\begin{cor}\label{c:p1case}
Let $s \geq d$. Then $C^s_1$ is not complemented in $\dot{B}^s_{1, \infty}$: there is no decomposition $\dot{B}^s_{1, \infty} = C^s_1 \oplus G$ with continuous projections.
\end{cor}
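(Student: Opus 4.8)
The plan is to deduce the statement directly from the abstract Proposition \ref{p:c1AbstractXE}, taking $X = \dot{B}^s_{1, \infty}$ and $E = C^s_1$. Recall that $C^s_1$ is by definition the space $\CC$ in the case $p = 1$, and that the statement of Theorem \ref{t:c2nonSepB} is valid for every $p \in [1, + \infty]$; it therefore provides a bounded map $J : \ell^\infty \tend \dot{B}^s_{1, \infty}$ together with the commuting quotient embedding $J' : \ell^\infty / c_0 \tend \dot{B}^s_{1, \infty} / C^s_1$. So once Proposition \ref{p:c1AbstractXE} is known, the only thing I would still need to check is its hypothesis: the existence of a countable family of bounded linear functionals on $\dot{B}^s_{1, \infty}$ satisfying the separation property \eqref{eq:countAssump}. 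This is precisely where the exponent $p = 1$ has to be handled on its own, since the argument used in Step 2 of Theorem \ref{t:c2ComplB} relied on the separability of the predual $\dot{B}^{-s}_{p', 1}$, which fails for $p = 1$ because $\dot{B}^{-s}_{\infty, 1}$ is not separable.

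To produce the required family I would proceed as follows. For each $m \in \Z$ the dyadic block $\dot{\Delta}_m$ kills polynomials, hence descends to a well-defined continuous linear map $\dot{B}^s_{1, \infty} \tend L^1$, $f \longmapsto \dot{\Delta}_m f$, with $\| \dot{\Delta}_m f \|_{L^1} \leq 2^{-ms} \| f \|_{\dot{B}^s_{1, \infty}}$. I would fix a countable dense subset $(\phi_\ell)_{\ell \geq 1}$ of the Schwartz space $\mc S$ and set
\begin{equation*}
g_{m, \ell}(f) := \big\langle \dot{\Delta}_m f, \phi_\ell \big\rangle_{L^1 \times L^\infty}, \qquad m \in \Z, \ \ell \geq 1.
\end{equation*}
Each $g_{m, \ell}$ is bounded on $\dot{B}^s_{1, \infty}$ with norm at most $2^{-ms} \| \phi_\ell \|_{L^\infty}$, and the index set is countable. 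If $g_{m, \ell}(f) = 0$ for all $m, \ell$, then for each fixed $m$ the $L^1$ function $\dot{\Delta}_m f$ pairs to zero against a dense subset of $\mc S$; since $\big| \langle \dot{\Delta}_m f, \phi - \phi_\ell \rangle \big| \leq \| \dot{\Delta}_m f \|_{L^1} \| \phi - \phi_\ell \|_{L^\infty}$ and $\mc S$-convergence implies uniform convergence, this forces $\langle \dot{\Delta}_m f, \phi \rangle = 0$ for all $\phi \in \mc S$, hence $\dot{\Delta}_m f = 0$. As this holds for every $m$, the Fourier transform of $f$ is supported at the origin, so $f$ is a polynomial, i.e.\ $f = 0$ in $\mc S' / \R[X]$. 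This verifies \eqref{eq:countAssump}.

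With \eqref{eq:countAssump} in hand and the map $J$ of Theorem \ref{t:c2nonSepB} available, Proposition \ref{p:c1AbstractXE} applies verbatim and gives that $C^s_1$ is not complemented in $\dot{B}^s_{1, \infty}$, which is the assertion of the Corollary. The only genuinely new ingredient compared with the case $p > 1$ is the construction of the countable separating family, replacing the now-missing separable predual; I expect this to be the one point requiring care, while everything else is a direct appeal to the abstract principle and to Theorem \ref{t:c2nonSepB}.
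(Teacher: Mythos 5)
Your proof is correct, and the overall architecture (reduce to Proposition \ref{p:c1AbstractXE} via the map $J$ of Theorem \ref{t:c2nonSepB}, then verify the countable separation hypothesis \eqref{eq:countAssump} by hand) is exactly the paper's. Where you genuinely diverge is in how you produce the separating family. The paper uses the embedding $\dot{B}^s_{1,\infty} \subset \dot{B}^{s-d}_{\infty,\infty}$ together with the fact that $\dot{B}^{s-d}_{\infty,\infty}$ is the dual of the separable space $\dot{B}^{d-s}_{1,1}$ (Theorem \ref{t:c2BesovDuality}), and takes a dense sequence in the unit ball of that predual. You instead build the functionals directly as $f \mapsto \langle \dot{\Delta}_m f, \phi_\ell\rangle$ with $(\phi_\ell)$ a countable dense subset of $\mc S$; the verification that vanishing of all these brackets forces $\dot{\Delta}_m f = 0$ for every $m$, hence $\what f$ supported at the origin, hence $f = 0$ in $\mc S'/\R[X]$, is complete and correct. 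Your route is more elementary and self-contained -- it bypasses both the embedding theorem and the duality theorem, and it works uniformly for every $p$ and $r$, so it would in fact also cover the case $p > 1$ without invoking the separable predual $\dot{B}^{-s}_{p',1}$ at all. The paper's route is shorter given the results it already cites. One cosmetic point: your functionals $g_{m,\ell}$ are not normalized to the unit ball of the dual (their norms $2^{-ms}\|\phi_\ell\|_{L^\infty}$ are unbounded over $m$), but this is harmless, since in the countability argument of Proposition \ref{p:c1AbstractXE} each index $(m,\ell)$ is fixed once $I_{k,n}$ is chosen, and the resulting upper bound $\|g_{m,\ell}\|\,\|T\|\,\|J\|$ is still independent of $|F|$; if you prefer, simply rescale each $g_{m,\ell}$ by its norm.
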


\begin{proof}
We aim at applying Proposition \ref{p:c1AbstractXE}. By virtue of Theorem \ref{t:c2nonSepB}, we already have constructed a map $J : \ell^\infty \tend \dot{B}^s_{1, \infty}$ that satisfies the assumptions of Proposition \ref{p:c1AbstractXE}. It only remains to show that \eqref{eq:countAssump} is fulfilled for the Banach space $\dot{B}^s_{1, \infty}$.

\medskip

This is an easy consequence of the embedding properties of homogeneous spaces (see Proposition 2.20 p. 64 in \cite{BCD}). Since $\dot{B}^s_{1, \infty} \subset \dot{B}^{s-d}_{\infty, \infty}$ and because $\dot{B}^{s-d}_{\infty, \infty}$ has a separable predual $\dot{B}^{s-d}_{\infty, \infty} = (\dot{B}^{d-s}_{1, 1})'$ by Theorem \ref{t:c2BesovDuality}, we may fix a dense sequence $(g_n)$ in the unit ball of $\dot{B}^{d-s}_{1, 1}$ that satisfies
\begin{equation*}
\forall f \in \dot{B}^{s}_{1, \infty}, \qquad \Big( \forall n \geq 1, \quad \big\langle g_n, f \big\rangle_{\dot{B}^{s-d}_{\infty, \infty} \times \dot{B}^{d-s}_{1, 1}} = 0 \Big) \; \Rightarrow \; f=0.
\end{equation*}
As a consequence, we may apply Proposition \ref{p:c1AbstractXE} to show that $C^s_1$ is uncomplemented in $\dot{B}^s_{1, \infty}$.
\end{proof}

\begin{rmk}
Of course, the abstract principle of Proposition \ref{p:c1AbstractXE} is in reality a small part of the proof, the core of the argument is the construction of the map $J'$. Nevertheless, the same arguments may be used in a number of different frameworks. Let us give an example of a seemingly very different situation where this principle works.

Let $H$ be a real separable Hilbert space and $X := \mc L(H)$ the space of bounded linear operators on $H$. Define the subspace $E = \mc K (H)$ of compact operators. N.J. Kalton showed (see Theorem 6 in \cite{Kalton}) that $\mc K(H)$ is uncomplemented in $\mc L(H)$. The argument, presented in a simpler form\footnote{Kalton's result is much more general and expresses the complementation of the space $\mc K (X, Y)$ of compact operators between two Banach spaces in terms of whether $\mc L(X, Y)$ contains or not a copy of $\ell^\infty$. The proof in \cite{CQ} is adapted to the simpler framework of Hilbert spaces.} in \cite{CQ} (Theorem 6.1, pp. 351-354), although phrased differently, can be reformulated to fit in our framework. Define, for any $u \in \ell^\infty$, the operator $Ju : H \tend H$ by
\begin{equation*}
Ju(x) = \sum_{n = 1}^\infty u(n) \langle e_n, x \rangle_H \, e_n,
\end{equation*}
where $(e_n)_{n \geq 1}$ is a Hilbert basis of $H$. Then $Ju$ is compact if and only if $u \in c_0$, as it is in that case a limit of finite rank operators, so we get an embedding $J' : \ell^\infty / c_0 \tend \mc L(H) / \mc K (H)$. In addition, any $T \in \mc L(H)$ is equal to $T = 0$ if and only if
\begin{equation*}
\forall n, m \geq 1, \qquad g_{n,m}(T) := \langle e_n, T e_m \rangle_H = 0,
\end{equation*}
so that there is a countable number of bounded linear maps $g_{n, m} : \mc L(H) \tend \R$ that fulfill the assumptions of Proposition \ref{p:c1AbstractXE}. Our abstract principle (Proposition \ref{p:c1AbstractXE}) therefore applies to show that $\mc K(H)$ is uncomplemented in $\mc L(H)$.

\end{rmk}

\addcontentsline{toc}{section}{References}
{\small

}

\end{document}